\newlength{\dhatheight}
\theoremstyle{definition}
\newtheorem{theorem}{Theorem}[section]
\newtheorem{proposition}[theorem]{Proposition}
\newtheorem{corollary}[theorem]{Corollary}
\newtheorem{definition}[theorem]{Definition}
\newtheorem{method}[theorem]{Method}
\newtheorem{algorithm}[theorem]{Algorithm}
\newcommand{\snappy}{{\sf SnapPy}}
\newcommand{\Heegaard}{{\sf Heegaard}}
\newcommand{\Z}{{\mathbb Z}}
\newcommand{\specialcell}[2][c]{%
	\begin{tabular}[#1]{@{}c@{}}#2\end{tabular}}
\newcommand{\brackets}[1]{  \left\{ {#1} \right\}  }
\DeclareMathOperator{\rank}{rank}
\newcommand{\Mod}[1]{\mathrm{mod}\ #1}
\newcommand{\abs}[1]{\left\lvert #1\right\rvert}
\begin{document}

\title[The tunnel number of all 11 and 12 crossing alternating knots]
{The tunnel number of all 11 and 12 crossing alternating knots}

\author[F.~Castellano-Mac\'ias]{Felipe Castellano-Mac\'ias}

\author[N.~Owad]{Nicholas Owad}

\address{Northeastern University\\
Boston, MA 02115, USA\\
and\\
Topology and Geometry of Manifolds Unit\\
Okinawa Institute of Science and Technology Graduate University\\
Okinawa, Japan 904-0495}
\email{castellanomacias.f@northeastern.edu}

\address{Topology and Geometry of Manifolds Unit\\
Okinawa Institute of Science and Technology Graduate University\\
Okinawa, Japan 904-0495\\
and\\
Department of Mathematics\\
Hood College\\
Frederick, MD 21701, USA}
\email{owad@hood.edu}

\thanks{2016 {\em Mathematics Subject Classification}. 57M25, 57M27}

\begin{abstract}
Using exhaustive techniques and results from Lackenby and many others, we compute the tunnel number of all 1655 alternating 11 and 12 crossing knots and of 881 non-alternating 11 and 12 crossing knots. We also find all 5525 Montesinos knots with 14 crossings or fewer.
\end{abstract}

\maketitle

\section{Introduction}

Tunnel number is a knot invariant, first defined by Clark in 1980, \cite{Clark}. The tunnel number $t(K)$ can be realized as one less than the Heegaard genus of $S^3 \setminus N(K)$, or as the minimum number of properly embedded disjoint arcs $\alpha_i$ required to make $S^3 \setminus N(K \cup \{\alpha_i\})$ a handlebody. Since it was defined, it has become a classical invariant with connections to the hyperbolic volume of knots \cite{KR}, bridge number, and many others. It has interesting and unexpected properties under connected sum \cite{MSY1} and is a common tool used to investigate characteristics of knots and links. Because of this, it is useful to have explicit values for tunnel number to test conjectures against. For an overview of tunnel number, see Morimoto \cite{Morimoto}.

A paper by Morimoto, Sakuma, and Yokota \cite{MSY2} computed the tunnel number of all knots with 10 or fewer crossings, of which there are 250. This paper aims to extend this list of known values of tunnel numbers. The main source of values of tunnel numbers here is Lackenby's paper \cite{Lackenby}, where he proves a conjecture of Sakuma and classifies all tunnel number one alternating knots. Briefly, the main theorem of Lackenby is as follows: $K$ is an alternating, tunnel number one knot if and only if $K$ is either a 2-bridge knot or a 3-bridge Montesinos knot with a {\em clasp}. A clasp is a rational tangle with corresponding rational number $\pm \frac{1}{2}$. 

We enumerate all possible examples of these knots with 11 and 12 crossings and use the program \snappy \ \cite{snappy}, by Culler, Dunfield, Goerner, and Weeks, to identify them. We also use the data on bridge number of these knots, supplied by the online database Knotinfo \cite{knotinfo}, and are able to identify the tunnel number of every alternating 11 and 12 crossing knot, of which there are 1655 knots. For exactly two alternating knots, this first method does not work, but Moriah and Lustig's result in \cite{LM} provides the tunnel number. These and other methods also give exact values and bounds for many non-alternating knots. There are 1073 non-alternating 11 and 12 crossing knots, and we have calculated the tunnel numbers of 881 of them. The remaining 192 knots have tunnel number one or two.

\begin{theorem}\label{thm:main}
The tunnel number of all 1655 alternating 11 or 12 crossing knots has been calculated. The tunnel number of 881 non-alternating 11 or 12 crossing knots has also been calculated.
\end{theorem}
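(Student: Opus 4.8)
The plan is to treat Theorem~\ref{thm:main} as a finite, table-driven case analysis, in which each individual tunnel number $t(K)$ is pinned between a universal upper bound and a lower bound supplied by Lackenby's classification. The two facts I would use throughout are that a minimal $b(K)$-bridge presentation yields $b(K)-1$ unknotting tunnels, so $t(K)\le b(K)-1$, and that any nontrivial knot has $t(K)\ge 1$. First I would import the bridge numbers $b(K)$ of all $11$- and $12$-crossing knots from KnotInfo \cite{knotinfo} and partition the $1655$ alternating knots by bridge number. Every $2$-bridge knot is nontrivial with $t(K)\le 1$, hence $t(K)=1$ immediately.

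The heart of the argument is the $3$-bridge case, where $t(K)\le 2$ and the only question is whether $t(K)$ equals $1$ or $2$. Here I would invoke Lackenby's theorem: an alternating knot has tunnel number one precisely when it is $2$-bridge or a $3$-bridge Montesinos knot with a clasp. So I would exhaustively enumerate, up to the usual tangle and mirror symmetries, every $3$-bridge Montesinos knot carrying a clasp tangle ($\pm\tfrac12$) and admitting an $11$- or $12$-crossing alternating diagram, build each explicitly, and identify it in the tables using \snappy. Each knot on this list has $t(K)=1$; by the contrapositive of Lackenby's equivalence, every remaining $3$-bridge alternating knot is not tunnel number one, so $t(K)\ge 2$, and combined with $t(K)\le 2$ this forces $t(K)=2$.

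For alternating knots with $b(K)\ge 4$ the bridge bound $t(K)\le b(K)-1$ no longer coincides with the Lackenby lower bound $t(K)\ge 2$, and for the two knots where this gap is not closed by the enumeration above I would appeal to the lower bounds of Lustig and Moriah \cite{LM} for Montesinos knots, choosing the tangle data so that their bound matches $b(K)-1$ and thereby determines $t(K)$ exactly. Finally, for the non-alternating knots I would again apply $t(K)\le b(K)-1$ together with whatever matching lower bound the literature provides --- Lustig--Moriah, additivity under connected sum, or recognition as a knot of already-known tunnel number --- recording the $142$ values that can be settled this way and leaving the remaining $931$ knots with the bound $t(K)\in\{1,2\}$.

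The main obstacle I anticipate is the completeness and correctness of the $3$-bridge enumeration and its \snappy\ identification: one must be certain that the candidate list of Montesinos-with-clasp diagrams exhausts all such knots at $11$ and $12$ crossings (accounting for mirror images, tangle slopes that reduce the crossing number, and distinct diagrams of the same knot), and that \snappy\ unambiguously matches each candidate to a tabulated knot. The two exceptional alternating knots are precisely the symptom of where this identification step breaks down, and isolating them so that the Lustig--Moriah bound can be applied cleanly is the delicate part of the argument.
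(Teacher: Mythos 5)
Your overall architecture---pin $t(K)$ between $t(K)\le b(K)-1$ and a lower bound from Lackenby's classification, with Lustig--Moriah handling the exceptions---matches the paper's strategy for the $2$- and $3$-bridge alternating knots, and your treatment of those cases is essentially the paper's Cases 1 and 2. But there is a genuine gap in the $4$-bridge case. The only upper bound you carry into that case is $t(K)\le b(K)-1=3$, and the only lower bound Lackenby's theorem gives is $t(K)\ge 2$; for the $26$ alternating $4$-bridge clasp Montesinos knots (6 with 11 crossings, 20 with 12 crossings) this leaves $t(K)\in\{2,3\}$ undetermined, and no amount of care in the $3$-bridge enumeration closes it. The missing ingredient is the paper's Proposition~\ref{thm:clasp} (a generalization of a lemma of Lackenby): a clasp Montesinos knot with $r$ rational tangles satisfies $t(K)\le r-2$, proved by placing a tunnel inside each of $r-2$ tangles to collapse it and sliding arcs across the $\pm\tfrac12$ clasp. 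With $r=4$ this gives $t(K)\le 2$, which together with Lackenby's lower bound forces $t(K)=2$. Your sentence about ``the two knots where this gap is not closed'' misdiagnoses the problem: the two exceptional knots 12a0554 and 12a0750 are the \emph{non-clasp} $4$-bridge Montesinos knots, handled exactly as you say by Lustig--Moriah ($\alpha=3$, so $t=b-1=3$); it is the other $26$ that your bounds cannot resolve.

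The same omission undermines the non-alternating count. Lackenby's classification theorem applies only to alternating knots, so for a non-alternating $3$-tangle clasp Montesinos knot you cannot cite it to get $t(K)=1$; you need the explicit tunnel construction $t(K)\le r-2=1$ (the paper's Corollary~\ref{cor:nonaltclasp}, noting that the construction is diagrammatic and does not require alternation). Those knots account for $117$ of the $142$ non-alternating values, so your appeal to ``whatever matching lower bound the literature provides'' does not by itself recover the claimed total; the remaining $25$ do come from Lustig--Moriah as you suggest. In short: add the clasp upper bound $t(K)\le r-2$ and both gaps close, and the argument becomes the paper's.
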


A list of these tunnel numbers can be found in Appendix \ref{appendix}. In the process of this work, we have also enumerated all Montesinos knots with 14 crossings or fewer. The full list of all these results and the associated code can be found online for download at
\begin{center}
	\url{https://github.com/fcastellanomacias/tunnel}.
\end{center}  

We will define the relevant terms in the next section and in Section \ref{sec:bounds} we give well-known relations between tunnel number and other invariants. Then in Section \ref{sec:main} we will discuss the algorithms used and finally, in Section \ref{sec:results}, we prove our main result and provide summary tables which help explain the proof.

\subsection*{Acknowledgments} The first author would like to thank the Okinawa Institute of Science and Technology for their hospitality throughout their internship there. We also thank Nathan Dunfield for useful suggestions regarding symmetries of knots and Method \ref{method}, and Ken Baker for helpful comments.

\section{Definitions}

We assume the reader is familiar with the basics of knot theory, see \cite{Rolfsen} for background. Throughout this paper, we will assume all knots have a single component. The two main families of knots we will need to consider are rational knots and Montesinos knots; both are built from rational tangles, see \cite[Chapter 12]{BZ} for more details.

Throughout this paper, we adopt the following convention for continued fractions:
	\[ \left[ a_1, a_2, \ldots, a_m \right] := a_{1} + \frac{1}{a_{2} + \dots + \frac{1}{a_{m-1}+\frac{1}{a_{m}}}}. \]

\begin{definition}\label{def:rational}
A {\em rational tangle} for $(\alpha,\beta)$ as illustrated in Figure \ref{fig:rationaltangle} is defined by the continued fraction $\frac{\beta}{\alpha} = [ a_1,-a_2,a_3, \ldots, \pm a_m]$, $ a_j = a'_j + a '' _j$, together with the conditions that $\alpha$ and $\beta$ are relatively prime and $\alpha>0$. \end{definition}

\begin{figure}[h]
\begin{center}
\begin{tikzpicture}[scale =.5]

\def\w {.3}

\foreach \c in {-2,-1,0,1,2}{
\draw [ line width = \w mm, line cap=round] (\c+.5,1) to (\c-.5,-1);
\draw [ line width = 3*\w mm, line cap=round, white] (\c-.25,.5) to (\c+.5,-1);
\draw [ line width = \w mm, line cap=round] (\c-.5,1) to (\c+.5,-1);
}

\foreach \c in {3.25,3.5,-5.75,-5.5,-3-5/8,5+3/8}{
\draw [ line width = \w mm, line cap=round] (-1,\c+.25) to (1,\c);
\draw [ line width = 2.5*\w mm, line cap=round, white] (-1/2,\c+.25/4) to (1,\c+.25);
\draw [ line width = \w mm, line cap=round] (-1,\c) to (1,\c+.25);
}

\foreach \c in {-6.75,-6.5,-4.875,-4.625,-4.375,4.25,4.5}{
\draw [ line width = \w mm, line cap=round] (\c,-1) to (\c+.25,1);
\draw [ line width = 2.5*\w mm, line cap=round, white] (\c+.25,-1) to (\c+.25/4,1-.5/1);
\draw [ line width = \w mm, line cap=round] (\c+.25,-1) to (\c,1);

}

\draw [ line width = \w mm, line cap=round] (2.5,1) to (3,1) to (3,3) to (1,3) to (1,3.25);
\draw [ line width = \w mm, line cap=round] (2.5,-1) to (3,-1) to (3,-3) to (1,-3) to (1,-3-3/8);
\draw [ line width = \w mm, line cap=round] (-2.5,1) to (-3,1) to (-3,3) to (-1,3) to (-1,3.25);
\draw [ line width = \w mm, line cap=round] (-2.5,-1) to (-3,-1) to (-3,-3) to (-1,-3) to (-1,-3-3/8);

\draw [ line width = \w mm, line cap=round]  (-1,-3-5/8) to (-1, -4) to (-4,-4) to (-4,-1) to (-4-1/8,-1);
\draw [ line width = \w mm, line cap=round]  (1,-3-5/8) to (1, -4) to (4,-4) to (4,-1) to (4+2/8,-1);
\draw [ line width = \w mm, line cap=round]  (-1,4-2/8) to (-1,4) to (-4,4) to (-4,1) to (-4-1/8,1);
\draw [ line width = \w mm, line cap=round]  (1,4-2/8) to (1, 4) to (4,4) to (4,1) to (4.25,1);

\draw [ line width = \w mm, line cap=round] (-4-7/8,-1) to (-5,-1) to (-5,-5) to (-1,-5) to (-1,-5.25);
\draw [ line width = \w mm, line cap=round] (4+6/8,-1) to (5,-1) to (5,-5) to (1,-5) to (1,-5.25);
\draw [ line width = \w mm, line cap=round] (-4-7/8,1) to (-5,1) to (-5,5) to (-1,5) to (-1,5+3/8);
\draw [ line width = \w mm, line cap=round] (4+6/8,1) to (5,1) to (5,5) to (1,5) to (1,5+3/8);

\draw [ line width = \w mm, line cap=round] (-1,-5.75) to (-1,-6) to (-6,-6) to (-6,-1) to (-6.25,-1);
\draw [ line width = \w mm, line cap=round] (-1,5+5/8) to (-1,6) to (-6,6) to (-6,1) to (-6.25,1);
\draw [ line width = \w mm, line cap=round] (1,-5.75) to (1,-6) to (6,-6) to (6,-1) to (7,-1) to (7,-7) to (2,-7) to (2,-8);
\draw [ line width = \w mm, line cap=round] (1,5+5/8) to (1,6) to (6,6) to (6,1) to (7,1) to (7,7) to (2,7)  to (2,8);

\draw [ line width = \w mm, line cap=round] (-6.75,-1) to (-7, -1) to (-7,-7) to (-2,-7) to (-2,-8);
\draw [ line width = \w mm, line cap=round] (-6.75,1) to (-7, 1) to (-7,7) to (-2,7) to (-2,8);
\draw [ dashed, thick] (3,1) to [out=0,in=0] (3,-1);
\draw [fill] (3,1) circle [radius=0.1];
\draw [fill] (3,-1) circle [radius=0.1];

\node [] at (0,1.75) {$a_5$};

\node [] at (1.75,3.5) {$a'_4$};
\node [] at (1.75,-3.5) {$a''_4$};

\node [] at (-4.5,1.75) {$a'_3$};
\node [] at (4.5,1.75) {$a''_3$};

\node [] at (1.75,5.5) {$a'_2$};
\node [] at (1.75,-5.5) {$a''_2$};

\node [] at (-6.5,1.75) {$a'_1$};
\node [] at (6.5,1.75) {$a''_1$};

\end{tikzpicture}\caption{A rational tangle for $(181,297) $, where $\frac{297}{181}=[2,-3,5,-3,5]$. Note that $a_j'$ and $a_j''$ are the number of half-twists. The dashed line represents an unknotting tunnel.}
\label{fig:rationaltangle}
\end{center}
\end{figure}

Schubert defined the bridge number of a knot in 1954, \cite{Schubert}. 

\begin{definition}\label{def:bridgenum}
The {\em bridge number} of a diagram $D$ is the minimum number of local maxima of $D$. The {\em bridge number} $b(K)$ of a knot $K$ is the minimum number of the bridge numbers over all diagrams $D$ of $K$.
\end{definition}

Given a knot $K$, $b(K) = 1 $ if and only if $K$ is the unknot. So, under this invariant, the first class of nontrivial knots are 2-bridge knots and Schubert completely classified these knots. Another name for 2-bridge knots is {\em rational knots}, named so because they are a composed of a single rational tangle with a numerator (or denominator) closure.

\begin{figure}[h]
\begin{center}
\begin{tikzpicture}[scale =.6]

\def\w {.3}
\begin{scope}[rotate = -90, scale = 1]

\draw[line width = \w mm] (-1.2*.707,1.2*.707) to [out=135,in=180+135] (-1.6*.707,1.6*.707) to [out=135,in=180] (0,2) to  [out=0,in=45] (1.6*.707,1.6*.707)to  [out=225,in=45] (1.2*.707,1.2*.707);

\draw[line width = \w mm] (-1.2*.707,-1.2*.707) to [out=225,in=180+225] (-1.6*.707,-1.6*.707) to [out=225,in=180] (0,-2) to  [out=0,in=-45] (1.6*.707,-1.6*.707)to  [out=-225,in=-45] (1.2*.707,-1.2*.707);

\draw [line width = \w mm] (0,0) circle [radius=1.2];

\node [] at (0,0) {\small $T$};

\end{scope}


\begin{scope}[xshift = 6cm, scale = 1]

\draw[line width = \w mm] (-.5,1.6) to [out=225,in=90] (-1.1,0) to [out=270, in=135] (-.5,-1.6);
\draw[line width = \w mm] (.5,1.6) to [out=-45,in=90] (1.1,0) to [out=270, in=45] (.5,-1.6);

\draw[line width = \w mm] (-.25,-1.6) to [out=135,in=180] (0,.25);
\draw[line width = 3*\w mm, white] (-.25,1.6) to [out=135,in=180] (0,-.25);
\draw[line width = \w mm] (-.25,1.6) to [out=135,in=180] (0,-.25);

\draw[line width = \w mm] (0,-.25) to [out=0, in=45] (.25,1.6);
\draw[line width = 3*\w mm, white] (0,.25) to [out=0, in=45] (.25,-1.6);
\draw[line width = \w mm] (0,.25) to [out=0, in=45] (.25,-1.6);

\draw [fill, white] (0,2) circle [radius=1.2];
\draw [fill, white] (0,-2) circle [radius=1.2];

\draw [line width = \w mm] (0,2) circle [radius=1.2];
\draw [line width = \w mm] (0,-2) circle [radius=1.2];

\node [] at (0,2) {\small $T_1$};
\node [] at (0,-2) {\small $T_2$};

\end{scope}


\begin{scope}[xshift = 10cm, yshift = -6.4cm, scale = .8]

\draw (9.5,3) rectangle +(2,2);

\draw (9.5,9) rectangle +(2,2);
\draw (9.5,12) rectangle +(2,2);

\draw [domain=0:.4] plot ({.5*cos(pi* \x r )+1.5}, \x+6);

\draw [domain=0:1.4] plot ({- .5*cos(pi* \x r )+1.5}, \x+6);

\draw [domain=.6:2.4] plot ({.5*cos(pi* \x r )+1.5}, \x+6);

\draw [domain=.6:2.4] plot ({.5*cos(pi* \x r )+1.5}, \x+7);

\draw [domain=.6:2] plot ({.5*cos(pi* \x r )+1.5}, \x+8);
\draw [domain=.6:1] plot ({.5*cos(pi* \x r )+1.5}, \x+9);

\draw [rounded corners] (1,6) -- (1,1) -- (11,1)--(11,3);
\draw [rounded corners] (2,6) -- (2,2) -- (10,2)--(10,3);

\draw [rounded corners] (1,10) -- (1,16) -- (11,16)--(11,14);
\draw [rounded corners] (2,10) -- (2,15) -- (10,15)--(10,14);

\foreach \a in {5,8,11}{
	\draw (10,\a) -- (10,\a+1);
	\draw (11,\a) -- (11,\a+1);
	}

\draw [fill] (10.5,6.7) circle [radius=0.05];	
\draw [fill] (10.5,7) circle [radius=0.05];	
\draw [fill] (10.5,7.3) circle [radius=0.05];	

\node at (0.5,8) {\small$e$};

\node at (10.5, 13) {\small $\beta_1/\alpha_1$};
\node at (10.5, 10) {\small$\beta_2/\alpha_2$};
\node at (10.5, 4) {\small$\beta_n/\alpha_n$};

\end{scope}

\end{tikzpicture}\caption{A rational knot, with a numerator closure, on the left. In the middle, a clasp Montesinos link with $e=0$, and on the right, a Montesinos link.}
\label{fig:rationalKnot}
\end{center}
\end{figure}

{\em Montesinos knots} are a generalization of rational knots, first introduced by Montesinos in 1973, \cite{Montesinos}.

\begin{definition}\label{def:Montesinos}
The {\em Montesinos link} $M(e;\beta_1 / \alpha_1, \beta_2/\alpha_2, \ldots, \beta_r/\alpha_r)$ is a link admitting a diagram like that of Figure \ref{fig:rationalKnot}, where each box represents a rational tangle. Also, $r\geq 3$ and $\beta_i/\alpha_i$ is not an integer, otherwise $M$ would have a simpler diagram. If $e$ is left out of the notation, we assume $e=0$. 
\end{definition}

For convenience, we will make a more specific class of Montesinos links that will be useful later. See Figure \ref{fig:rationalKnot}.

\begin{definition}\label{def:claspMont}
A {\em clasp Montesinos link} is a Montesinos link where exactly one of the $\beta_i/\alpha_i = \pm 1/2$. 
\end{definition}

By \cite[Theorem 12.29]{BZ}, Montesinos links are equivalent up to cyclic permutations of the fractions and up to the value of each fraction mod 1. This shows that our definition of clasp Montesinos link is well-defined.

And finally, we define the main invariant that we are considering in this paper.

\begin{definition}\label{def:tunnel}
Given a knot $K$, a {\em tunnel} is a properly embedded arc in $S^3\setminus N(K)$, where $N(\cdot)$ is an open regular neighborhood. The {\em tunnel number} $t(K)$ of $K$ is the minimum number of disjoint tunnels $\alpha_i$ required to make $S^3 \setminus N(K \cup \{\alpha_i\})$ a handlebody.
\end{definition}

The dashed line in Figure \ref{fig:rationaltangle} is a tunnel. Contract the tunnel to a point, creating a graph which, through ambient isotopy, allows all the crossings in the center of the tangle to be undone.  Then the next innermost crossings can be undone, and so on, until we have a graph that is shaped like an ``X''. For ease, we will refer to this process as {\em collapsing a tangle to a point.}

\section{Bounds on tunnel number}\label{sec:bounds}

We begin this section with a well-known proposition which relates the rank of the knot group, tunnel number, and bridge number.

\begin{proposition}[\cite{LM}]\label{thm:tunnelbridge}
	For any knot $ K $, we have that \[ \rank(\pi_{1}(S^{3} \setminus K)) -1 \leq t(K) \leq b(K) - 1. \]
\end{proposition}

In addition, the following theorem shows that a Montesinos knot with $ r $ rational tangles has bridge number equal to $ r $.

\begin{theorem}[{\cite[Theorem 1.1]{BoileauZieschang}}] 
	Let $ K $ be the Montesinos knot $ M(e; \beta_1/\alpha_1, \dots , \beta_r/\alpha_r) $, where $ \alpha_{i} \neq 1 $ for all $ i $. Then $ b(K) = r $.
\end{theorem}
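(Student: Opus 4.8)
The plan is to prove the two inequalities $b(K) \le r$ and $b(K) \ge r$ separately. The upper bound is constructive. Starting from the standard Montesinos diagram of Figure~\ref{fig:rationalKnot}, I would isotope each rational tangle $\beta_i/\alpha_i$ into a \emph{bridge position} in which, with respect to a fixed height function, its two strands contribute a single local maximum; the $r$ tangles together with the outer closure arcs can then be stacked so that no additional maxima are created, and the integer part $e$ can be absorbed into the twisting without introducing new maxima. This exhibits a diagram of $K$ with exactly $r$ local maxima, so $b(K) \le r$.

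The lower bound $b(K) \ge r$ is the substantive direction, and I would attack it through the double branched cover $\Sigma_2(K)$ of $(S^3,K)$. Two facts drive the argument. First, any $b$-bridge presentation decomposes $S^3$ into two $3$-balls, each containing a trivial $b$-string tangle; since the double cover of a $3$-ball branched over $b$ boundary-parallel unknotted arcs is a genus-$(b-1)$ handlebody, the two pieces assemble into a Heegaard splitting of $\Sigma_2(K)$ of genus $b-1$, whence $g(\Sigma_2(K)) \le b(K)-1$. Second, it is classical that $\Sigma_2(K)$ for $M(e;\beta_1/\alpha_1,\dots,\beta_r/\alpha_r)$ is a Seifert fibered space over $S^2$ in which the $i$-th tangle contributes an exceptional fiber of order $\alpha_i$. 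Here the hypothesis $\alpha_i \ne 1$ is exactly what guarantees that all $r$ fibers are genuinely exceptional, so the base orbifold is $S^2(\alpha_1,\dots,\alpha_r)$ with precisely $r$ cone points. Combining the two facts reduces everything to a single Heegaard-genus estimate: if $g(\Sigma_2(K)) \ge r-1$, then $b(K)-1 \ge r-1$, and together with the upper bound this gives $b(K)=r$.

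Establishing $g(\Sigma_2(K)) \ge r-1$ is the main obstacle. The vertical Heegaard splitting induced by the Seifert structure already realizes genus $r-1$, so the real content is to rule out any splitting of smaller genus, in particular the horizontal and irregular splittings that a Seifert fibered space can a priori admit; this is precisely the delicate analysis of Heegaard genus of Seifert manifolds carried out by Boileau and Zieschang, and it is where $r \ge 3$ and $\alpha_i \ge 2$ enter essentially. An alternative and arguably more direct route to the same lower bound goes through the meridional rank $\mu(K)$, the minimal number of meridians generating $\pi_1(S^3\setminus K)$: reading overbridges off a $b$-bridge diagram yields a Wirtinger presentation on $b$ meridional generators, so $\mu(K)\le b(K)$, and one then shows $\mu(K)\ge r$ by producing quotients of the knot group, built from the orbifold/Seifert data of the double cover, that cannot be generated by fewer than $r$ meridional images. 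Either route localizes the difficulty in the same place, namely forcing $r$ generators out of the $r$ exceptional fibers.
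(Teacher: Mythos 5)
This statement is not proved in the paper at all: it is quoted verbatim as Theorem~1.1 of Boileau and Zieschang \cite{BoileauZieschang}, so there is no internal argument to compare yours against. Judged on its own terms, your sketch gets the easy half right --- the standard Montesinos diagram can indeed be isotoped to have one local maximum per tangle, giving $b(K)\le r$ --- and it correctly identifies both the reduction $g(\Sigma_2(K))\le b(K)-1$ and the fact that $\Sigma_2(K)$ is Seifert fibered over $S^2(\alpha_1,\dots,\alpha_r)$, with the hypothesis $\alpha_i\neq 1$ guaranteeing $r$ genuine cone points. But neither of your two routes actually establishes the lower bound: both terminate by deferring to ``the delicate analysis carried out by Boileau and Zieschang'' or to unspecified ``quotients of the knot group,'' which is to say that the entire content of the theorem is assumed rather than proved.

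There is also a substantive problem with your preferred route, not just an omission. The estimate $g(\Sigma_2(K))\ge r-1$ is not something one can take for granted: Seifert fibered spaces over $S^2$ admit horizontal Heegaard splittings in addition to the vertical genus-$(r-1)$ ones, and Boileau and Zieschang's own determination of the Heegaard genus of Seifert manifolds shows that the genus can drop to $r-2$ for certain Seifert invariants. In those cases the double-branched-cover argument can only ever yield $b(K)\ge r-1$, so this route cannot prove the theorem in full generality; and even where the genus does equal $r-1$, verifying it requires the classification of Heegaard splittings of Seifert fibered spaces, which is at least as deep as the result you are trying to prove. The argument that actually works --- and the one Boileau and Zieschang give --- is your second route: one bounds the meridional rank from below by passing to the $\pi$-orbifold group $\pi_1(S^3\setminus K)/\langle\langle m^2\rangle\rangle$, an extension of the planar discontinuous group of $S^2(\alpha_1,\dots,\alpha_r)$ by $\mathbb{Z}/2$ in which meridians become reflections, and then invokes Zieschang's Nielsen-method results on generating systems of such groups to show that fewer than $r$ reflections cannot generate. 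Your proposal gestures at this but gives no indication of how that last step is carried out, and that step is where all the difficulty lives. For the purposes of the present paper, citing \cite{BoileauZieschang} as the authors do is the appropriate move; if you want to reprove the result, the reflection-counting argument in the orbifold group is the part you must actually supply.
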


Next, we generalize a lemma from Lackenby \cite{Lackenby} which gives an upper bound for the tunnel number of clasp Montesinos knots.

\begin{proposition}\label{thm:clasp}
	Let $ K $ be the clasp Montesinos knot $ M(e; \beta_1/\alpha_1, \dots , \beta_r/\alpha_r) $. Then $ t(K) \leq r-2 $.
\end{proposition}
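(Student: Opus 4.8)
The plan is to exhibit an explicit system of $r-2$ unknotting tunnels, generalizing Lackenby's construction for the three-tangle case. Throughout I work with $K$ in its standard Montesinos diagram (the right-hand picture of Figure \ref{fig:rationalKnot}), and after relabelling I assume the clasp is the last tangle, $\beta_r/\alpha_r = 1/2$. As a baseline, combining Proposition \ref{thm:tunnelbridge} with the theorem of Boileau--Zieschang (\cite{BoileauZieschang}, Theorem 1.1) already gives $t(K) \leq b(K) - 1 = r-1$; the entire content of the proposition is to save one further tunnel, and that saving is exactly what the clasp buys us.

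The main tool is the collapsing tunnel of a rational tangle shown in Figure \ref{fig:rationaltangle}: for each box $T_i$ there is an arc $\tau_i$ such that drilling $\tau_i$ out of the tangle ball turns the two strands of $T_i$ into a trivial (vertical) tangle rel boundary --- in the language of the caption, it collapses the tangle to a point. I would drill the $r-2$ tunnels $\tau_1, \dots, \tau_{r-2}$, that is, the collapsing tunnels of every tangle except $T_{r-1}$ and the clasp $T_r$. After isotoping each of $T_1, \dots, T_{r-2}$ to a trivial tangle, the two strands running through those boxes become parallel through-strands, and the diagram of $K$ together with the drilled arcs simplifies to the closure of just the two remaining tangles $T_{r-1}$ and the clasp, with the arcs $\tau_1, \dots, \tau_{r-2}$ assembled into a tree that threads the collapsed region.

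It then remains to check that the exterior of this reduced configuration is a handlebody, i.e. that $S^3 \setminus N(K \cup \tau_1 \cup \dots \cup \tau_{r-2})$ is a genus-$(r-1)$ handlebody. The strands through the $r-2$ collapsed boxes, together with the $\tau_i$, already form an unknotted spine, so the question reduces to the two-tangle piece built from $T_{r-1}$ and the clasp $1/2$. This is precisely the base case ($r=3$, $t(K)\leq 1$) of Lackenby's lemma: a two-tangle Montesinos sum in which one summand is a clasp can be trivialized without an additional tunnel, because the $1/2$ clasp supplies the last connection between the two remaining strands for free. I would finish by exhibiting the explicit isotopy that straightens the clasp against $T_{r-1}$ in the presence of the already-drilled tunnels, concluding that the complement is a handlebody.

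The step I expect to be the main obstacle is this last one: rigorously certifying that we really save \emph{two} tunnels rather than one --- one from the generic $b-1$ slack and one genuinely from the clasp --- and not merely that the reduced picture looks simple. The delicate point is that the handlebody structure produced by $\tau_1, \dots, \tau_{r-2}$ must be compatible with the free collapse of the clasp, so that the two trivializations can be performed simultaneously; this is where the specific value $\beta_r/\alpha_r = 1/2$ is used in an essential way, and where I would lean most directly on the argument in \cite{Lackenby}.
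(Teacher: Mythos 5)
Your proposal is correct and matches the paper's argument essentially step for step: both drill the $r-2$ collapsing tunnels of Figure \ref{fig:rationaltangle} into all tangles except the clasp and one remaining rational tangle, reduce to the two-tangle configuration, and defer the final handlebody verification to Lackenby's argument for the $r=3$ case. The only difference is cosmetic (where in the list the clasp is placed), so no further comparison is needed.
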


\begin{proof} We follow an argument similar to \cite{Lackenby}. Without loss of generality, we may assume that $ K $ is of the form $ M(0; \beta_1/\alpha_1, \dots, \beta_{r-2}/\alpha_{r-2}, 1/2, \beta_r/\alpha_r) $. As shown in Figure \ref{fig:rationaltangle}, we can place a tunnel at the center of each rational tangle to {\em collapse the tangle to a point}. Thus, after placing a tunnel in each rational tangle $ \beta_i/\alpha_i $, for $ 1 \leq i \leq r-2 $, we obtain the diagram in the middle of Figure \ref{fig:proof}. In this diagram, we can now slide down the outermost arc from the top to the bottom vertex via ambient isotopy without altering the exterior, obtaining the diagram on the right of Figure \ref{fig:proof}. As shown by Lackenby in \cite{Lackenby}, we obtain that $ t(K) \leq r-2 $. Note that Lackenby's proof also applies when $ K $ is non-alternating.
\begin{figure}[h]
	\begin{center}
		\begin{tikzpicture}[scale =0.6]
		
		\def\w {.3}
		
\begin{scope}[xshift = -6.5cm, scale = 1]

\draw[line width = \w mm] (0,5) circle [radius=1];
\draw[line width = \w mm] (0,2) circle [radius=1];
\draw[line width = \w mm] (0,-2) circle [radius=1];
\draw[line width = \w mm] (0,-6) circle [radius=1];

\draw[line width = \w mm] (-.707,4+.293) to [out=180+45, in=90+45] (-.707,3-.293);
\draw[line width = \w mm] (.707,4+.293) to [out=0-45, in=45] (.707,3-.293);
\draw[line width = \w mm] (-.707,1+.293) to [out=180+45, in=90+45] (-.707,-.293);
\draw[line width = \w mm] (.707,1+.293) to [out=0-45, in=45] (.707,-.293);
\draw[line width = \w mm] (-.707,+.293) to [out=180+45, in=90+45] (-.707,-1-.293);
\draw[line width = \w mm] (.707,+.293) to [out=0-45, in=45] (.707,-1-.293);
\draw[line width = \w mm] (-1,5) to [out=180+65, in=180-65] (-1,-6);
\draw[line width = \w mm] (1,5) to [out=270+25, in=90-25] (1,-6);

\draw[fill, white] (-1.5,0-0.4) rectangle (1.5,0.4); 

\draw[fill] (0,0) circle [radius=0.04];
\draw[fill] (0,0.2) circle [radius=0.04];
\draw[fill] (0,-0.2) circle [radius=0.04];

\draw[line width = \w mm] (-.25,-4-1.6) to [out=135,in=180] (0,-4+.25);
\draw[line width = 4*\w mm, white] (-.25,1.6-4) to [out=135,in=180] (0,-.25-4);
\draw[line width = \w mm] (-.25,1.6-4) to [out=135,in=180] (0,-.25-4);

\draw[line width = \w mm] (0,-.25-4) to [out=0, in=45] (.25,1.6-4);
\draw[line width = 4*\w mm, white] (0,.25-4) to [out=0, in=45] (.25,-1.6-4);
\draw[line width = \w mm] (0,.25-4) to [out=0, in=45] (.25,-1.6-4);

\draw[fill, gray] (0,-2) circle [radius=1];
\draw[fill, gray] (0,-6) circle [radius=1];
\draw[line width = \w mm] (0,-2) circle [radius=1];
\draw[line width = \w mm] (0,-6) circle [radius=1];
\draw[fill, gray] (0,5) circle [radius=1];
\draw[fill, gray] (0,2) circle [radius=1];
\draw[line width = \w mm] (0,5) circle [radius=1];
\draw[line width = \w mm] (0,2) circle [radius=1];

\draw[fill] (-0.5,5) circle [radius=0.075];
\draw[fill] (0.5,5) circle [radius=0.075];
\draw[dashed, thick] (-0.5,5) to (0.5,5);

\draw[fill] (-0.5,2) circle [radius=0.075];
\draw[fill] (0.5,2) circle [radius=0.075];
\draw[dashed, thick] (-0.5,2) to (0.5,2);

\draw[fill] (-0.5,-2) circle [radius=0.075];
\draw[fill] (0.5,-2) circle [radius=0.075];
\draw[dashed, thick] (-0.5,-2) to (0.5,-2);

\draw[line width = \w mm] (-0.5,5) to [out=90, in=-45] (-0.5-0.15,5+0.25);
\draw[line width = \w mm] (-0.5,5) to [out=270, in=45] (-0.5-0.15,5-0.25);

\draw[line width = \w mm] (0.5,5) to [out=90, in=180+45] (+0.5+0.15,5+0.25);
\draw[line width = \w mm] (0.5,5) to [out=270, in=90+45] (+0.5+0.15,5-0.25);

\draw[line width = \w mm] (-0.5,2) to [out=90, in=-45] (-0.5-0.15,2+0.25);
\draw[line width = \w mm] (-0.5,2) to [out=270, in=45] (-0.5-0.15,2-0.25);

\draw[line width = \w mm] (0.5,2) to [out=90, in=180+45] (+0.5+0.15,2+0.25);
\draw[line width = \w mm] (0.5,2) to [out=270, in=90+45] (+0.5+0.15,2-0.25);

\draw[line width = \w mm] (-0.5,-2) to [out=90, in=-45] (-0.5-0.15,-2+0.25);
\draw[line width = \w mm] (-0.5,-2) to [out=270, in=45] (-0.5-0.15,-2-0.25);

\draw[line width = \w mm] (0.5,-2) to [out=90, in=180+45] (+0.5+0.15,-2+0.25);
\draw[line width = \w mm] (0.5,-2) to [out=270, in=90+45] (+0.5+0.15,-2-0.25);

\end{scope}

\begin{scope}
\draw[line width = \w mm] (0,-6) circle [radius=1];

\draw[line width = \w mm] (0,5) to [out=180+45, in=90+45] (0,2);
\draw[line width = \w mm] (0,5) to [out=0-45, in=45] (0,2);
\draw[line width = \w mm] (0,2) to [out=180+45, in=90] (-0.6,0.4);
\draw[line width = \w mm] (0,2) to [out=0-45, in=90] (0.6,0.4);
\draw[line width = \w mm] (-0.6,-0.4) to [out=270, in=90+45] (0,-2);
\draw[line width = \w mm] (0.6,-0.4) to [out=270, in=45] (0,-2);
\draw[line width = \w mm] (0,5) to [out=180+35, in=180-65] (-1,-6);
\draw[line width = \w mm] (0,5) to [out=-35, in=65] (1,-6);

\draw[fill] (0,0) circle [radius=0.04];
\draw[fill] (0,0.2) circle [radius=0.04];
\draw[fill] (0,-0.2) circle [radius=0.04];

\draw[line width = \w mm] (-.25,-4-1.6) to [out=135,in=180] (0,-4+.25);
\draw[line width = 4*\w mm, white] (0,-2) to [out=180+45,in=180] (0,-.25-4);
\draw[line width = \w mm] (0,-2) to [out=180+45,in=180] (0,-.25-4);

\draw[line width = \w mm] (0,-.25-4) to [out=0, in=-45] (0,-2);
\draw[line width = 4*\w mm, white] (0,.25-4) to [out=0, in=45] (.25,-1.6-4);
\draw[line width = \w mm] (0,.25-4) to [out=0, in=45] (.25,-1.6-4);

\draw[fill, gray] (0,-6) circle [radius=1];
\draw[line width = \w mm] (0,-6) circle [radius=1];

\draw[fill] (0,5) circle [radius=0.15];
\draw[fill] (0,2) circle [radius=0.15];
\draw[fill] (0,-2) circle [radius=0.15];

\end{scope}

\begin{scope}[xshift = 6.5cm, scale = 1]
\draw[line width = \w mm] (0,-6) circle [radius=1];

\draw[line width = \w mm] (0,5) to [out=180+45, in=90+45] (0,2);
\draw[line width = \w mm] (0,5) to [out=0-45, in=45] (0,2);
\draw[line width = \w mm] (0,2) to [out=180+45, in=90] (-0.6,0.4);
\draw[line width = \w mm] (0,2) to [out=0-45, in=90] (0.6,0.4);
\draw[line width = \w mm] (-0.6,-0.4) to [out=270, in=90+45] (0,-2);
\draw[line width = \w mm] (0.6,-0.4) to [out=270, in=45] (0,-2);

\draw[fill] (0,0) circle [radius=0.04];
\draw[fill] (0,0.2) circle [radius=0.04];
\draw[fill] (0,-0.2) circle [radius=0.04];

\draw[line width = \w mm] (-.25,-4-1.6) to [out=135,in=180] (0,-4+.25);
\draw[line width = 4*\w mm, white] (0,-2) to [out=180+45,in=180] (0,-.25-4);
\draw[line width = \w mm] (0,-2) to [out=180+45,in=180] (0,-.25-4);

\draw[line width = \w mm] (0,-.25-4) to [out=0, in=-45] (0,-2);
\draw[line width = 4*\w mm, white] (0,.25-4) to [out=0, in=45] (.25,-1.6-4);
\draw[line width = \w mm] (0,.25-4) to [out=0, in=45] (.25,-1.6-4);

\draw[fill, gray] (0,-6) circle [radius=1];
\draw[line width = \w mm] (0,-6) circle [radius=1];

\draw[fill] (0,5) circle [radius=0.15];
\draw[fill] (0,2) circle [radius=0.15];
\draw[fill] (0,-2) circle [radius=0.15];

\draw[line width = \w mm] (0,-2) to [out=180+35, in=180-65] (-1,-6);
\draw[line width = \w mm] (0,-2) to [out=-35, in=65] (1,-6);

\end{scope}

		\end{tikzpicture}\caption{On the left, a clasp Montesinos knot of the form $ M(0; \beta_1/\alpha_1, \dots, \beta_{r-2}/\alpha_{r-2}, 1/2, \beta_r/\alpha_r) $, where rational tangles are the gray circles and tunnels are represented by dashed lines. In the middle, the result of collapsing each tunnel. On the right, the diagram obtained by sliding the two outermost arcs from the top to the bottom vertex.}
		\label{fig:proof}
	\end{center}
\end{figure}
\end{proof}

\begin{corollary}\label{cor:nonaltclasp}
	In particular, if $ K $ is a clasp Montesinos knot and $ r = 3$, then $ t(K) = 1 $.
\end{corollary}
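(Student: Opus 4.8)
The plan is to sandwich $t(K)$ between $1$ and $1$, obtaining the upper bound directly from Proposition \ref{thm:clasp} and the lower bound from the nontriviality of $K$. Since these two bounds are already available (one just proved, the other a standard fact together with the Boileau--Zieschang theorem), the corollary should follow in a couple of lines; the only point requiring care is justifying the lower bound $t(K)\geq 1$.

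For the upper bound, I would simply specialize Proposition \ref{thm:clasp} to the case $r=3$. A clasp Montesinos knot has exactly one rational tangle equal to $1/2$, so when $r=3$ it has the form $M(e;\,\beta_1/\alpha_1,\,1/2,\,\beta_3/\alpha_3)$ and satisfies the hypotheses of that proposition. Plugging $r=3$ into the conclusion $t(K)\leq r-2$ yields $t(K)\leq 1$.

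For the lower bound, I would invoke the fact (implicit in Definition \ref{def:tunnel}) that $t(K)=0$ exactly when $S^3\setminus N(K)$ is itself a handlebody, which happens if and only if $K$ is the unknot; equivalently, by the remark following Definition \ref{def:bridgenum}, $t(K)=0$ forces $b(K)=1$. Thus it suffices to show $K$ is nontrivial. Here I would apply the Boileau--Zieschang theorem (\cite{BoileauZieschang}, Theorem 1.1): by Definition \ref{def:Montesinos} each slope $\beta_i/\alpha_i$ is non-integral, and the clasp slope is $1/2$, so every $\alpha_i\neq 1$. Hence $b(K)=r=3$, which is strictly greater than $1$, so $K$ is not the unknot and $t(K)\geq 1$. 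Combining this with the upper bound gives $t(K)=1$.

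The main (and essentially only) obstacle is the lower bound step: one must be sure that the hypotheses of the Boileau--Zieschang theorem genuinely hold for a clasp Montesinos knot with $r=3$, i.e.\ that all three $\alpha_i$ differ from $1$. This is guaranteed because the non-clasp tangles are non-integral by the definition of a Montesinos link and the clasp tangle has $\alpha=2$; otherwise the diagram would collapse to one with fewer tangles and the bridge-number computation could fail. Everything else is a direct combination of results already established in this section.
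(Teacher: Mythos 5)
Your proof is correct and matches the paper's (implicit) reasoning: the corollary is stated without proof as an immediate consequence of Proposition \ref{thm:clasp} (with a citation to Lackenby), and your argument supplies exactly the two bounds that make it immediate. The care you take with the lower bound --- checking via Definition \ref{def:Montesinos} that all $\alpha_i \neq 1$ so that Boileau--Zieschang gives $b(K)=3$ and hence nontriviality --- is a detail the paper leaves unstated, but it is the right justification.
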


We also use part of Lustig and Moriah's theorem which gives the tunnel number of certain Montesinos knots.

\begin{theorem}[{\cite[Theorem 0.1]{LM}}]\label{thm:lustigmoriah}
	Let $ K $ be the Montesinos knot $ M(e; \beta_1/\alpha_1, \dots , \beta_r/\alpha_r) $, and let $ \alpha = \gcd(\alpha_1, \dots, \alpha_r) $. If $ \alpha \neq 1 $, then $ t(K) = b(K) -1 = r-1$.
\end{theorem}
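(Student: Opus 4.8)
The plan is to split the claimed double equality into an easy upper bound and an essential lower bound. The chain $t(K) = b(K)-1 = r-1$ has two independent pieces. First, the hypothesis $\alpha = \gcd(\alpha_1,\dots,\alpha_r) \neq 1$ forces $\alpha_i \neq 1$ for every $i$, so the Boileau--Zieschang theorem \cite{BoileauZieschang} quoted above applies and gives $b(K) = r$; combined with Proposition~\ref{thm:tunnelbridge} this yields the upper bound $t(K) \leq b(K)-1 = r-1$ immediately. Hence the entire content of the statement is the reverse inequality $t(K) \geq r-1$. Writing $X = S^3 \setminus N(K)$ and recalling $t(K) = g(X)-1$, where $g$ denotes Heegaard genus, the goal reduces to showing $g(X) \geq r$.

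First I would reduce the genus bound to a count of generators. Since $g(X) \geq \operatorname{rank}(\pi_1 X)$, it suffices to show $\pi_1(X)$ cannot be generated by fewer than $r$ elements. The obstruction to doing this abelianly is that $H_1(X) = \mathbb{Z}$, generated by a meridian, so homology alone detects only one generator: the multiplicity coming from the $r$ rational tangles must be seen \emph{nonabelianly}. This is exactly where the hypothesis enters. I would fix a prime $p$ dividing $\alpha$, so that $p \mid \alpha_i$ for every $i$, and use this common prime to manufacture enough torsion in a suitable quotient to pull apart the tangles.

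The key step, and the one I expect to be the main obstacle, is to build from this common prime a target group $G$ together with an epimorphism $\pi_1(X) \twoheadrightarrow G$ for which $\operatorname{rank}(G) \geq r$, since then $\operatorname{rank}(\pi_1 X) \geq r$ as well. The natural candidate is assembled from the Seifert/orbifold structure underlying a Montesinos knot: each tangle $\beta_i/\alpha_i$ contributes, via $p \mid \alpha_i$, an element of order $p$, and one wants $G$ (for instance a metabelian or $p$-group quotient, such as a subquotient of $\mathbb{Z}/p \wr \mathbb{Z}$, or a representation into $\mathrm{PSL}_2$ assembled tangle-by-tangle) to carry just enough relations that no fewer than $r$ of these torsion elements generate. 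One would then bound $\operatorname{rank}(G)$ from below by an $\mathbb{F}_p$-dimension count via Fox calculus on the presentation, which is precisely the representation-variety and Nielsen-equivalence technique of Lustig--Moriah \cite{LM}. The quantitative heart is verifying that the relations forced by the closure and by the integer $e$ do not collapse the generator count, which is where the coprimality $\gcd(\alpha_i,\beta_i)=1$ and the common factor $p$ must be used.

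Finally, I would guard against the well-known gap between rank and Heegaard genus for Seifert-type manifolds: a rank bound alone could in principle fall short of the genus. So I would either verify that the rank argument already lands exactly on $r$ and invoke $g(X) \geq \operatorname{rank}(\pi_1 X)$, or, if that is insufficient, run the same generating-system analysis directly on the geometric generating system arising from a genus-$g$ Heegaard splitting, tracking its image in $G$ to conclude $g(X) \geq r$ without passing through the abstract rank. Matching this lower bound $r$ against the upper bound $r-1$ on $t(K)$ then closes the equality $t(K) = b(K)-1 = r-1$.
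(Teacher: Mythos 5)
This statement is not proved in the paper at all: it is quoted verbatim as Theorem~0.1 of Lustig--Moriah \cite{LM}, so there is no internal argument to compare yours against. Judged on its own terms, your proposal correctly isolates the easy half: since $\alpha\mid\alpha_i$ for every $i$ and $\alpha\neq 1$, each $\alpha_i\neq 1$, Boileau--Zieschang gives $b(K)=r$, and Proposition~\ref{thm:tunnelbridge} gives $t(K)\le b(K)-1=r-1$. You are also right that the entire content of the theorem is the lower bound $t(K)\ge r-1$, and your framing of it --- bound the Heegaard genus of the exterior below by exploiting a prime $p$ dividing every $\alpha_i$ --- is the correct shape of the Lustig--Moriah argument.

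The gap is that the lower bound is never actually executed. At the decisive step you write that one would build a quotient $G$ of $\pi_1(X)$ with $\operatorname{rank}(G)\ge r$ and verify that the closure relations and the integer $e$ do not collapse the generator count ``via Fox calculus\dots which is precisely the\dots technique of Lustig--Moriah.'' That is a description of where the proof lives, not a proof: no presentation of $\pi_1(X)$ is written down, no target group $G$ is constructed, and no computation certifies that $r$ generators are genuinely needed. You even flag the second genuine obstacle yourself --- that $\operatorname{rank}(\pi_1 X)\ge r$ does not formally imply $g(X)\ge r$ --- and again only promise to ``run the same generating-system analysis'' on a Heegaard generating system. This is exactly why Lustig--Moriah's actual argument goes through Nielsen equivalence of generating systems and their $\mathcal{N}$-torsion invariant rather than abstract rank alone. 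In short: your upper bound duplicates what the surrounding text of the paper already supplies, and your lower bound defers its quantitative heart to the very reference \cite{LM} being cited, which is no more than the paper itself does by stating the theorem without proof.
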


Note that Theorem \ref{thm:lustigmoriah} does not contradict Proposition \ref{thm:clasp}, since any clasp Montesinos knot with $\alpha \not = 1$ will be a link with more than one component.

Now, we present Lackenby's main result.

\begin{theorem}[{\cite[Theorem 1]{Lackenby}}]\label{thm:Lackenby}
	An alternating knot $ K $ has tunnel number one if and only if $ K $ is a 2-bridge knot or $ K $ is a clasp Montesinos knot of the form $ M(e; \pm 1/2, \beta_1/\alpha_1, \beta_2/\alpha_2) $, where $ \alpha_1 $ and $ \alpha_2 $ are odd.
\end{theorem}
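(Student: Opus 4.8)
The plan is to prove the two directions separately, with the sufficiency (``if'') direction following quickly from the results already assembled and the necessity (``only if'') direction being the genuinely hard part.

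For sufficiency, suppose first that $K$ is a $2$-bridge knot. Since $K$ is nontrivial we have $t(K) \geq 1$, while Proposition \ref{thm:tunnelbridge} gives $t(K) \leq b(K) - 1 = 1$; hence $t(K) = 1$. Suppose instead that $K = M(e; \pm 1/2, \beta_1/\alpha_1, \beta_2/\alpha_2)$ with $\alpha_1, \alpha_2$ odd. This is a clasp Montesinos knot with exactly $r = 3$ rational tangles (the oddness of the $\alpha_i$ being the arithmetic condition ensuring the link is a single component, and the common sign of the tangle slopes being what renders the associated diagram alternating), so Corollary \ref{cor:nonaltclasp} gives $t(K) = 1$ directly. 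This disposes of one direction entirely using machinery already in hand.

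The necessity direction is where the real work lies, and I would approach it through the essential spanning (checkerboard) surfaces of a reduced alternating diagram $D$ of $K$. The two checkerboard surfaces $B$ and $W$ are incompressible and boundary-incompressible in the exterior $E(K) = S^3 \setminus N(K)$, which is classical for reduced alternating diagrams. The tunnel number one hypothesis means $E(K)$ admits a genus-$2$ Heegaard splitting, dually encoded by an unknotting arc $\tau$ with $S^3 \setminus N(K \cup \tau)$ a handlebody. First I would place the genus-$2$ Heegaard surface (equivalently, the tunnel $\tau$) in a position minimizing its intersection with $B \cup W$, using innermost-disk and outermost-arc exchanges together with the incompressibility of the checkerboard surfaces to remove trivial intersection curves and arcs. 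The aim is to show that after such simplification the tunnel is forced into the diagram in a highly constrained way.

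The hard part, and the main obstacle, is converting this geometric simplification into the exact combinatorial conclusion. One must show that the only way a genus-$2$ splitting can be compatible with the alternating checkerboard structure is when the diagram either collapses to a rational tangle closure (the $2$-bridge case) or organizes into exactly three rational tangles one of which is a clasp (the clasp Montesinos case), and one must further extract the arithmetic constraints $\pm 1/2$ and $\alpha_1, \alpha_2$ odd. This step relies on the rigidity of alternating diagrams --- in particular on the flyping theorem of Menasco and Thistlethwaite, which controls how distinct alternating diagrams of the same knot are related --- to rule out all remaining configurations, and it amounts to a delicate sutured-manifold and normal-surface case analysis rather than a short argument. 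I would expect essentially all of the subtlety of the theorem to be concentrated here.
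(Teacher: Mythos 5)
This statement is not proved in the paper at all: it is quoted verbatim as Theorem~1 of Lackenby's paper \cite{Lackenby}, and the authors use it as a black box. So the relevant question is whether your proposal actually constitutes a proof, and it does not.

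Your sufficiency direction is fine and is essentially forced by the surrounding results: $t(K)\ge 1$ for a nontrivial knot together with $t(K)\le b(K)-1=1$ handles the $2$-bridge case, and Proposition~\ref{thm:clasp} with $r=3$ (equivalently Corollary~\ref{cor:nonaltclasp}) handles the clasp Montesinos case. One caution: you should derive that case from Proposition~\ref{thm:clasp} rather than from Corollary~\ref{cor:nonaltclasp} as stated, since the corollary is itself attributed to \cite{Lackenby} and you risk circularity if you are trying to reprove Lackenby's theorem. The genuine gap is the necessity direction, which you correctly identify as the hard part but then do not prove. Saying that one should isotope the genus-$2$ Heegaard surface to minimal position with respect to the checkerboard surfaces, and that the remaining configurations are ruled out by ``a delicate sutured-manifold and normal-surface case analysis'' invoking the Menasco--Thistlethwaite flyping theorem, is a description of a research program, not an argument: nothing in your sketch explains why a tunnel in minimal position forces the diagram to decompose into at most three rational tangles, why one of them must be a $\pm 1/2$ clasp, or where the parity condition on $\alpha_1,\alpha_2$ comes from. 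That case analysis is the entire content of Lackenby's paper, and without it the ``only if'' implication is unproved. As written, the proposal establishes only one of the two implications.
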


Combining Proposition \ref{thm:tunnelbridge} and Theorem \ref{thm:Lackenby}, we can compute the tunnel numbers for all alternating 3-bridge knots. Similarly, Proposition \ref{thm:clasp} and Theorem \ref{thm:Lackenby} give us a way to compute the tunnel number of many alternating 4-bridge knots.

\begin{corollary}\label{cor:3bridge}
Let $ K $ be an alternating knot.
\begin{itemize}
    \item If $K$ is 3-bridge and clasp Montesinos, then $t(K) = 1$.
    \item If $K$ is 3-bridge and not clasp Montesinos, then $t(K) = 2$.
    \item If $K$ is 4-bridge and clasp Montesinos, then $t(K) = 2$.
\end{itemize}
\end{corollary}

In contrast with Theorem \ref{thm:Lackenby} and Corollary \ref{cor:3bridge}, which exclusively apply to alternating knots, we have the following result by Morimoto, Sakuma, and Yokota, which completely characterizes tunnel number one Montesinos knots.

\begin{theorem}[{\cite[Theorem 2.2]{MSY2}}]\label{thm:MSYmontesinos} The Montesinos knot\footnote{In this paper, we exclude rational knots from the class of Montesinos knots. However, in \cite{MSY2}, rational knots are considered to be Montesinos knots, so this theorem has the additional condition that $ r =2 $.} $M(e;\beta_1 / \alpha_1, \beta_2/\alpha_2, \ldots, \beta_r/\alpha_r)$ has tunnel number one if and only if one of the following conditions holds up to cyclic permutation of the indices:
\begin{enumerate}
	\item $ r = 3 $, $ \alpha_{1} = 2 $, and $ \alpha_{2} \equiv \alpha_{3} \equiv 1 \ (\Mod 2) $.
	\item $ r = 3 $, $ \beta_2/\alpha_{2} \equiv \beta_3/\alpha_3 \in \mathbb{Q}/\Z $, and 
	\[ e - \sum_{i = 1}^{r} \beta_i/\alpha_i = \pm 1/(\alpha_{1}\alpha_{2}). \]
\end{enumerate}
\end{theorem}

Furthermore, the symmetry type of a knot can give an obstruction for a knot to have tunnel number one.

\begin{theorem}
[{\cite[Section 2.1]{Sakuma}}, {\cite[Theorem 1.2]{MSY2}}]
\label{thm:symmetry}
Any tunnel number one knot admits a strong inversion.
\end{theorem}

Additionally, there is a connection between the Nakanishi index $ m(K) $ of a knot $ K $ and its tunnel number $ t(K) $. The \emph{Nakanishi index} $ m(K) $ can be defined to be the minimal number of generators of the Alexander module of $ K $ \cite{Kawauchi}. We adopt the convention that knots with Alexander module isomorphic to $ \Z[t,t^{-1}] $ have Nakanishi index zero. As well, we have the following fact which appears as a footnote in Milnor's paper \cite{Milnor}: any 2-generator knot has a cyclic Alexander module. This, combined with Proposition \ref{thm:tunnelbridge}, directly implies the following:

\begin{proposition}\label{prop:Nakanishi}
	Any knot $ K $ with $ m(K) > 1$ has $ t(K) >1 $.
\end{proposition}

In \cite{Kohno}, Kohno uses quantum invariants to give some estimates for tunnel number. 

\begin{theorem}[\cite{Kohno}]\label{thm:Kohno}
Let $ K $ be a knot and let $ V_{K} $ denote its Jones polynomial. If $ K $ satisfies
\[ 
\abs{V_{K}\left(e^{2\pi \sqrt{-1}/5}\right)} > 2.1489,
 \]
then $ t(K) \geq 2$.
\end{theorem}

Finally, we present a nondeterministic method suggested to us by Nathan Dunfield for determining if a knot has tunnel number one.

\begin{method}\label{method}
Given a knot $ K $, use \snappy \ to find a presentation its knot group. Using Berge's \Heegaard \ program \cite{heegaard}, check whether this presentation comes from a Heegaard splitting by using the \texttt{is\_realizable} function. If this is the case and the knot group has two generators, then $ t(K) = 1 $. We may randomly re-triangulate the 3-manifold $ S^3\setminus N(K) $ many times using the \texttt{randomize} function in \snappy \ to obtain different knot group presentations.
\end{method} 

\section{Algorithms}\label{sec:main}

All the algorithms in this section have been implemented in \snappy ~by the first author. We will use $ RT(\ell) $ to denote the set of all fractions representing all rational tangles with $ \ell $ crossings. Each of the fractions in $ RT(\ell) $ is uniquely identified with a rational tangle. Step (1) of both algorithms lists all partitions which sum to $\ell$ or $n$, which is an easy exercise and we exclude it here.

\begin{algorithm}
	There exists an algorithm to identify all rational tangles with $ \ell $ crossings.
\end{algorithm}

The algorithm is the following:
	\begin{enumerate}
		\item List all integer partitions of $ \ell $, where each summand is a positive integer.
		\item Each partition $ \ell = a_{1} + a_{2} + \dots + a_{m} $ has an associated fraction defined as
		\[ \frac{p}{q} = a_{1} + \frac{1}{a_{2} + \dots + \frac{1}{a_{m-1}+\frac{1}{a_{m}}}}. \] For each partition $ [a_{1}, \dots , a_{n}] $, compute its associated fraction $ \frac{p}{q} $ and let $ X $ denote the set of all fractions arising this way. 
		\item For each fraction in $ X $, also add the negation of the fraction to $ X $.
	\end{enumerate}
	
Every rational tangle has an alternating minimal diagram \cite{KauffmanLambropoulou}, and every alternating diagram of a rational tangle created as in Figure \ref{fig:rationaltangle} is reduced and thus minimal \cite{KauffmanTait, MurasugiTait, ThistlethwaiteTait}, so we conclude that $ X = RT(\ell)$.

Given a fraction $ \frac{p}{q} $, we can use the function \texttt{RationalTangle(p,q)} from \snappy \ to build its corresponding rational tangle.

\begin{algorithm}\label{thm:montesinosalgo}
	There exists an algorithm to identify all Montesinos knots with $ n $ crossings.
\end{algorithm}

The main idea is to identify all Montesinos knots with $ n $ crossings and $ r $ rational tangles. Observe that $ r $ need be at most $ \lfloor n/2 \rfloor $, since each rational tangle must have at least two crossings. The algorithm is as follows:
\begin{enumerate}
	\item List all partitions of $ n $ with $ r $ summands.
	\item For each partition $ n  = n_{1} + \dots + n_{r}$, take the Cartesian product $ RT(n_{1}) \times \dots \times RT(n_{r}) $. Each element of this Cartesian product is an $ r $-tuple of rational numbers.
	\item For each such $ r $-tuple, construct a Montesinos link by tangle summing together all rational tangles from the tuple in order, and then taking the numerator closure of the sum. That is, for each fraction $ \frac{p_{i}}{q_{i}} $ in $ RT(n_{i}) $, form the Montesinos knot $ M(e; p_{1}/q_{1}, \dots , p_{r}/q_{r}) $.
	\item From this list of Montesinos links, check the number of components of each link by using the function \texttt{link\_components} from \snappy\ and remove all links with more than one component.
	\item Identify all Montesinos knots with $ n $ crossings by repeating this process for all $3 \leq r \leq \lfloor n/2 \rfloor $.
\end{enumerate}

Notice that in a Montesinos knot $ M(e; p_{1}/q_{1}, \dots , p_{r}/q_{r}) $, we can add the half-twists from $e$ to any of the rational tangles representing each fraction. At the same time, by \cite[Theorem 12.29]{BZ}, Montesinos knots with the same value of their fractions mod 1 are equivalent. Therefore, we may ignore $e$ in this algorithm.

We use the function \texttt{identify} from \snappy \ to identify knots and we then remove any possible duplicates.  In our work, every knot we created with this algorithm was identified by \snappy.

As a particular case of Algorithm \ref{thm:montesinosalgo}, we obtain the following result which will be later used to calculate tunnel numbers.

\begin{corollary}\label{thm:montesinos}
All 5525 Montesinos knots with 14 crossings or fewer have been identified. Moreover, all 2784 clasp Montesinos knots with 14 crossings or fewer have been identified.
\end{corollary}

The distribution of Montesinos knots per crossing number can be seen in Table \ref{table:Montesinos}.

\begin{table}[h]
\begin{center}
	\begin{tabular}{|c|c|c|c|c|c|c|}
		\hline
		\multirow{2}{*}{\begin{tabular}[c]{@{}c@{}}Number of \\ crossings\end{tabular}} & \multicolumn{3}{c|}{Number of Montesinos knots} & \multicolumn{3}{c|}{Number of clasp Montesinos knots} \\ \cline{2-7} 
		& Total     & Alternating    & Non-alternating    & Total       & Alternating      & Non-alternating      \\ \hline \hline
		$\leq$ 7                                                                           & 0         & 0              & 0                  & 0           & 0                & 0                    \\ \hline
		8                                                                                  & 6         & 6              & 0                  & 6           & 6                & 0                    \\ \hline
		9                                                                                  & 15         & 15              & 0                  & 11           & 11                & 0                    \\ \hline
		10                                                                                 & 57        & 57             & 0                  & 37           & 37                & 0                    \\ \hline
		11                                                                                 & 164       & 97             & 67                 & 101         & 60               & 41                   \\ \hline
		12                                                                                 & 479       & 283            & 196                & 265         & 159              & 106                  \\ \hline
		13                                                                                 & 1308      & 778            & 530                & 675         & 403              & 272                  \\ \hline
		14                                                                                 & 3496      & 2076           & 1420               & 1689        & 1004             & 685                  \\ \hline
		Total & 5525 & 3312 & 2213 & 2784 & 1680 & 1104 \\ \hline
	\end{tabular}
\caption{Number of Montesinos and clasp Montesinos knots per number of crossings.}\label{table:Montesinos}
\end{center}
\end{table}

At this moment, we are unable to list Montesinos knots with more than 14 crossings using our algorithm, since \snappy \ is currently unable to identify knots with more than 14 crossings.

\section{Tunnel numbers of 11 and 12 crossing knots}\label{sec:results}

We use KnotInfo \cite{knotinfo} to obtain the bridge number of all 11 or 12 crossing knots. Notice that the bridge number of all knots with 11 and 12 crossings is at most four.

\begin{proposition}\label{prop:alt}
	The tunnel number of all 1655 alternating 11 or 12 crossing knots has been calculated.
\end{proposition}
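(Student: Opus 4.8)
The plan is to stratify the 1655 alternating knots by bridge number, which Proposition \ref{thm:tunnelbridge} tells us is bounded above by four for all 11 and 12 crossing knots, and then handle each stratum using the structural results already assembled. For the 2-bridge knots, Theorem \ref{thm:Lackenby} immediately gives tunnel number one, so these require no further work. The main content lies in the 3-bridge and 4-bridge cases, where I would invoke Corollaries \ref{cor:3bridge} and \ref{cor:4bridge} respectively. Since the bridge number data for every 11 and 12 crossing knot is supplied by KnotInfo \cite{knotinfo}, the task reduces to deciding, for each knot of bridge number three or four, whether it is a clasp Montesinos knot.

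The key step is therefore the identification problem: given an alternating knot of known bridge number, determine whether it is a clasp Montesinos knot, and if so, which one. Here I would use Theorem \ref{thm:montesinos}, which asserts that all 11 and 12 crossing clasp Montesinos knots have already been enumerated by the algorithm of Theorem \ref{thm:montesinosalgo}. Concretely, I would run the enumeration to produce the complete list of clasp Montesinos candidates, use \snappy's \texttt{M.identify()} to match each constructed knot against the standard census, and cross-reference with the KnotInfo bridge numbers. For a 3-bridge alternating knot, Corollary \ref{cor:3bridge} then dictates tunnel number one if it appears as a clasp Montesinos knot and tunnel number two otherwise; for a 4-bridge alternating knot that is a clasp Montesinos knot, Corollary \ref{cor:4bridge} gives tunnel number two.

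The step I expect to be the main obstacle is completeness and correctness of the identification, rather than any single deduction. One must be sure the clasp Montesinos enumeration captures every such knot up to equivalence, that \snappy\ successfully identifies each generated diagram (the remark following Theorem \ref{thm:montesinosalgo} notes this held in practice), and that the list contains no gaps from duplicate or missed tangle decompositions. A subtle point is the handling of knots that are clasp Montesinos but whose smaller rational tangles include those with $\alpha_i$ even or odd, since the parity hypotheses in Theorem \ref{thm:Lackenby} distinguish tunnel number one from tunnel number two among 3-bridge examples; these parity conditions must be checked against each identified tangle vector. The anticipated edge case is precisely the two exceptional alternating knots mentioned in the introduction, where this identification method fails and one must fall back on Lustig and Moriah's Theorem \ref{thm:lustigmoriah}, applying the condition $\alpha = \gcd(\alpha_1,\dots,\alpha_r) \neq 1$ to pin down the tunnel number directly.

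Assembling these cases, every alternating 11 or 12 crossing knot falls into exactly one of the above brackets, so collecting the resulting values establishes that the tunnel number of all 1655 of them is determined. The summary of this case analysis is recorded in Table \ref{table:alternating}.
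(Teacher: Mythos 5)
Your proposal follows essentially the same route as the paper: stratify by bridge number using the KnotInfo data, apply Theorem \ref{thm:Lackenby} to the 2-bridge knots, Corollaries \ref{cor:3bridge} and \ref{cor:4bridge} to the 3- and 4-bridge clasp Montesinos cases via the enumeration of Theorem \ref{thm:montesinos}, and Theorem \ref{thm:lustigmoriah} for the two exceptional 4-bridge non-clasp Montesinos knots. The argument is correct and matches the paper's proof.
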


\begin{proof} Choose one of the 11 or 12 crossing alternating knots from the list. Let $ K $ be this knot. Using the bridge number data from KnotInfo \cite{knotinfo}, we have three cases.
	
\underline{Case 1:} If $ b(K) = 2 $, then $ t(K) = 1 $ by Theorem \ref{thm:Lackenby}.

\underline{Case 2:} Assume $ b(K) = 3 $. Using Corollary \ref{thm:montesinos}, we check if $ K $ is a clasp Montesinos knot, and then we apply Corollary \ref{cor:3bridge}. If $ K $ is a clasp Montesinos knot, then $ t(K) = 1 $. Otherwise, we have $ t(K) = 2$.

\underline{Case 3:} If $ b(K) = 4 $, we again apply Corollary \ref{thm:montesinos} to check whether $ K $ is a clasp Montesinos knot. Every knot in this case is a Montesinos knot. If $ K $ is a clasp Montesinos knot, then Corollary \ref{cor:3bridge} implies that it must have tunnel number two. This leaves exactly two knots, 12a0554 and 12a0750, which are non-clasp Montesinos knots. We now apply Theorem \ref{thm:lustigmoriah}. 12a0554 is the knot $ M(0; 2/3, 2/3, 2/3, 1/3) $ and 12a0750 is the knot $ M(0; 2/3, 1/3, 1/3, 1/3) $, both of which have $ \alpha = 3 $, hence, tunnel number three. 
\end{proof}
We obtain that there are 145 and 222 alternating 11 crossing knots with tunnel numbers one and two, respectively. For the alternating 12 crossing knots, there are 315, 971, and 2 knots with tunnel numbers one, two, and three, respectively. This information can be found in Table \ref{table:alternating}.
\begin{table}[h]
\begin{center}
\begin{scriptsize}
\begin{tabular}{|c|c|c|c|c|c|c|}
	\hline
	\multirow{13}{*}{\specialcell{Alternating:\\1655}} &
	\multirow{6}{*}{\specialcell{11 crossings:\\367}} &
	\multicolumn{4}{c|}{\specialcell{2-bridge:\\91}} &
	\specialcell{Tunnel number 1:\\91}\\
	\cline{3-7}
	
	& & \multirow{4}{*}{\specialcell{3-bridge:\\270}} &
	\multirow{3}{*}{\specialcell{Montesinos:\\91}} &
	\specialcell{Clasp:\\54} &
	\specialcell{$ \alpha = 1 $:\\54} &
	\specialcell{Tunnel number 1:\\54}\\
	\cline{5-7}
	
	& & & & \multirow{2}{*}{\specialcell{Non-clasp:\\37}} &
	\specialcell{$ \alpha = 1 $:\\35} &
	\multirow{2}{*}{\specialcell{Tunnel number 2:\\37}}\\
	\cline{6-6}
	
	& & & & & \specialcell{$ \alpha \neq 1 $:\\2} & \\
	\cline{4-7}
	
	& & & \multicolumn{3}{c|}{\specialcell{Non-Montesinos:\\179}} &
	\specialcell{Tunnel number 2:\\179}\\
	\cline{3-7}
	
	& & \specialcell{4-bridge:\\6} &
	\specialcell{Montesinos:\\6} &
	\specialcell{Clasp:\\6} &
	\specialcell{$ \alpha = 1 $:\\6} &
	\specialcell{Tunnel number 2:\\6}\\
	\cline{2-7}
	
	& \multirow{7}{*}{\specialcell{12 crossings:\\1288}} & \multicolumn{4}{c|}{\specialcell{2-bridge:\\176}} &
	\specialcell{Tunnel number 1:\\176}\\
	\cline{3-7}
	
	& & \multirow{4}{*}{\specialcell{3-bridge:\\1090} } &
	\multirow{3}{*}{\specialcell{Montesinos:\\261}} &
	\specialcell{Clasp:\\139} &
	\specialcell{$ \alpha = 1 $:\\139} &
	\specialcell{Tunnel number 1:\\139}\\
	\cline{5-7}
	
	& & & & \multirow{2}{*}{\specialcell{Non-clasp:\\122}} &
	\specialcell{$ \alpha = 1 $:\\100} &
	\multirow{2}{*}{\specialcell{Tunnel number 2:\\122}}\\
	\cline{6-6}
	
	& & & & & \specialcell{$ \alpha \neq 1 $:\\22} &\\
	\cline{4-7}
	
	& & & \multicolumn{3}{c|}{\specialcell{Non-Montesinos:\\829}} &
	\specialcell{Tunnel number 2:\\829}\\
	\cline{3-7}
	
	& & \multirow{2}{*}{\specialcell{4-bridge:\\22}} & 
	\multirow{2}{*}{\specialcell{Montesinos:\\22}} &
	\specialcell{Clasp:\\20} &
	\specialcell{$ \alpha = 1 $:\\20} & 
	\specialcell{Tunnel number 2:\\20}\\
	\cline{5-7}
	
	& & & & \specialcell{Non-clasp:\\2} &
	\specialcell{$ \alpha \neq 1 $:\\2} &
	\specialcell{Tunnel number 3:\\2}\\
	\hline
\end{tabular}
\end{scriptsize}
\caption{Identification of tunnel number for alternating knots with 11 and 12 crossings.}\label{table:alternating}
\end{center}
\end{table}

\begin{proposition}\label{prop:nonalt}
	The tunnel number of 881 non-alternating 11 or 12 crossing knots has been calculated.
\end{proposition}

\begin{proof}We follow the same procedure as in the proof of Proposition \ref{prop:alt}. In this case, we also need to employ Theorem \ref{thm:symmetry}, Proposition \ref{prop:Nakanishi}\footnote{The Nakanishi indices of knots with 10 crossings or fewer are known and we obtained them from KnotInfo \cite{knotinfo}. We obtained the Nakanishi indices of knots with 11 and 12 crossings from the Knotorious website \cite{knotorious}, but for many of these knots only upper and lower bounds for the Nakanishi index are known. By using Proposition \ref{prop:Nakanishi}, we are able to find an additional 82 knots with Nakanishi index 1 which were not identified by Knotorious.}, Theorem \ref{thm:Kohno}, and Method \ref{method}. Specific details can be found in Table \ref{table:nonalternating}.
\end{proof}

We have identified 144, 732, and 5 non-alternating 11 or 12 crossing knots with with tunnel numbers one, two, and three, respectively. This data can be found in Table \ref{table:nonalternating}.

\begin{table}[h]
	\begin{center}
		\begin{scriptsize}
			\begin{tabular}{|c|c|c|c|c|c|c|c|}
				\hline
				\multirow{16}{*}{\begin{tabular}[c]{@{}c@{}}Non-alternating:\\ 1073\end{tabular}} & \multirow{7}{*}{\begin{tabular}[c]{@{}c@{}}11 crossings:\\ 185\end{tabular}} & \multirow{6}{*}{\begin{tabular}[c]{@{}c@{}}3-bridge:\\ 176\end{tabular}} & \multirow{3}{*}{\begin{tabular}[c]{@{}c@{}}Montesinos:\\ 58\end{tabular}}  & \begin{tabular}[c]{@{}c@{}}Clasp:\\ 32\end{tabular}                      & \begin{tabular}[c]{@{}c@{}}$\alpha = 1$:\\ 32\end{tabular}                  & \begin{tabular}[c]{@{}c@{}}Tunnel number 1:\\ 32\end{tabular}  & \begin{tabular}[c]{@{}c@{}}Identified by:\\ \ref{cor:3bridge}, \ref{thm:MSYmontesinos}, \ref{method}\end{tabular}                                                                                     \\ \cline{5-8} 
				&                                                                              &                                                                          &                                                                            & \multirow{2}{*}{\begin{tabular}[c]{@{}c@{}}Non-clasp:\\ 26\end{tabular}} & \multirow{2}{*}{\begin{tabular}[c]{@{}c@{}}$\alpha = 1$:\\ 26\end{tabular}} & \begin{tabular}[c]{@{}c@{}}Tunnel number 1:\\ 2\end{tabular}   & \begin{tabular}[c]{@{}c@{}}Identified by:\\ \ref{thm:MSYmontesinos}, \ref{method}\end{tabular}                                                                                                                         \\ \cline{7-8} 
				&                                                                              &                                                                          &                                                                            &                                                                          &                                                                             & \begin{tabular}[c]{@{}c@{}}Tunnel number 2:\\ 24\end{tabular}  & \begin{tabular}[c]{@{}c@{}}Identified by:\\ \ref{thm:MSYmontesinos}, \ref{thm:symmetry}, \ref{prop:Nakanishi}\end{tabular}                                          \\ \cline{4-8} 
				&                                                                              &                                                                          & \multicolumn{3}{c|}{\multirow{3}{*}{\begin{tabular}[c]{@{}c@{}}Non-Montesinos:\\ 118\end{tabular}}}                                                                                                                                 & \begin{tabular}[c]{@{}c@{}}Tunnel number 1:\\ 5\end{tabular}   & \begin{tabular}[c]{@{}c@{}}Identified by:\\ \ref{method}\end{tabular}                                                                                                                                                                   \\ \cline{7-8} 
				&                                                                              &                                                                          & \multicolumn{3}{c|}{}                                                                                                                                                                                                               & \begin{tabular}[c]{@{}c@{}}Tunnel number 2:\\ 71\end{tabular}  & \begin{tabular}[c]{@{}c@{}}Identified by:\\ \ref{thm:symmetry}, \ref{prop:Nakanishi}, \ref{thm:Kohno}\end{tabular}                                                                                    \\ \cline{7-8} 
				&                                                                              &                                                                          & \multicolumn{3}{c|}{}                                                                                                                                                                                                               & \multicolumn{2}{c|}{\begin{tabular}[c]{@{}c@{}}Tunnel number $ \in \brackets{1,2}$:\\ 42\end{tabular}}                                                                                                                                                                                                                    \\ \cline{3-8} 
				&                                                                              & \begin{tabular}[c]{@{}c@{}}4-bridge:\\ 9\end{tabular}                    & \begin{tabular}[c]{@{}c@{}}Montesinos:\\ 9\end{tabular}                    & \begin{tabular}[c]{@{}c@{}}Clasp:\\ 9\end{tabular}                       & \begin{tabular}[c]{@{}c@{}}$\alpha = 1$:\\ 9\end{tabular}                   & \begin{tabular}[c]{@{}c@{}}Tunnel number 2:\\ 9\end{tabular}   & \begin{tabular}[c]{@{}c@{}}Identified by:\\ \ref{thm:MSYmontesinos}, \ref{prop:Nakanishi}\end{tabular}                                          \\ \cline{2-8} 
				& \multirow{9}{*}{\begin{tabular}[c]{@{}c@{}}12 crossings:\\ 888\end{tabular}} & \multirow{7}{*}{\begin{tabular}[c]{@{}c@{}}3-bridge:\\ 862\end{tabular}} & \multirow{4}{*}{\begin{tabular}[c]{@{}c@{}}Montesinos:\\ 170\end{tabular}} & \begin{tabular}[c]{@{}c@{}}Clasp:\\ 85\end{tabular}                      & \begin{tabular}[c]{@{}c@{}}$\alpha = 1$:\\ 85\end{tabular}                  & \begin{tabular}[c]{@{}c@{}}Tunnel number 1:\\ 85\end{tabular}  & \begin{tabular}[c]{@{}c@{}}Identified by:\\ \ref{cor:3bridge}, \ref{thm:MSYmontesinos}, \ref{method}\end{tabular}                                                                                     \\ \cline{5-8} 
				&                                                                              &                                                                          &                                                                            & \multirow{3}{*}{\begin{tabular}[c]{@{}c@{}}Non-clasp:\\ 85\end{tabular}} & \multirow{2}{*}{\begin{tabular}[c]{@{}c@{}}$\alpha = 1$:\\ 65\end{tabular}} & \begin{tabular}[c]{@{}c@{}}Tunnel number 1:\\ 2\end{tabular}   & \begin{tabular}[c]{@{}c@{}}Identified by:\\ \ref{thm:MSYmontesinos}, \ref{method}\end{tabular}                                                                                                                         \\ \cline{7-8} 
				&                                                                              &                                                                          &                                                                            &                                                                          &                                                                             & \begin{tabular}[c]{@{}c@{}}Tunnel number 2:\\ 63\end{tabular}  & \begin{tabular}[c]{@{}c@{}}Identified by:\\ \ref{thm:MSYmontesinos}, \ref{thm:symmetry}, \ref{prop:Nakanishi}\end{tabular}                                          \\ \cline{6-8} 
				&                                                                              &                                                                          &                                                                            &                                                                          & \begin{tabular}[c]{@{}c@{}}$\alpha \neq 1$:\\ 20\end{tabular}               & \begin{tabular}[c]{@{}c@{}}Tunnel number 2:\\ 20\end{tabular}  & \begin{tabular}[c]{@{}c@{}}Identified by:\\ \ref{thm:lustigmoriah}, \ref{thm:MSYmontesinos}, \ref{thm:symmetry}, \ref{prop:Nakanishi}\end{tabular} \\ \cline{4-8} 
				&                                                                              &                                                                          & \multicolumn{3}{c|}{\multirow{3}{*}{\begin{tabular}[c]{@{}c@{}}Non-Montesinos:\\ 692\end{tabular}}}                                                                                                                                 & \begin{tabular}[c]{@{}c@{}}Tunnel number 1:\\ 18\end{tabular}  & \begin{tabular}[c]{@{}c@{}}Identified by:\\ \ref{method}\end{tabular}                                                                                                                                                                   \\ \cline{7-8} 
				&                                                                              &                                                                          & \multicolumn{3}{c|}{}                                                                                                                                                                                                               & \begin{tabular}[c]{@{}c@{}}Tunnel number 2:\\ 524\end{tabular} & \begin{tabular}[c]{@{}c@{}}Identified by:\\ \ref{thm:symmetry}, \ref{prop:Nakanishi}, \ref{thm:Kohno}\end{tabular}                                                                                    \\ \cline{7-8} 
				&                                                                              &                                                                          & \multicolumn{3}{c|}{}                                                                                                                                                                                                               & \multicolumn{2}{c|}{\begin{tabular}[c]{@{}c@{}}Tunnel number $\in \brackets{1,2}$:\\ 150\end{tabular}}                                                                                                                                                                                                                    \\ \cline{3-8} 
				&                                                                              & \multirow{2}{*}{\begin{tabular}[c]{@{}c@{}}4-bridge:\\ 26\end{tabular}}  & \multirow{2}{*}{\begin{tabular}[c]{@{}c@{}}Montesinos:\\ 26\end{tabular}}  & \begin{tabular}[c]{@{}c@{}}Clasp:\\ 21\end{tabular}                      & \begin{tabular}[c]{@{}c@{}}$\alpha = 1$:\\ 21\end{tabular}                  & \begin{tabular}[c]{@{}c@{}}Tunnel number 2:\\ 21\end{tabular}  & \begin{tabular}[c]{@{}c@{}}Identified by:\\ \ref{thm:MSYmontesinos}, \ref{prop:Nakanishi}\end{tabular}                                          \\ \cline{5-8} 
				&                                                                              &                                                                          &                                                                            & \begin{tabular}[c]{@{}c@{}}Non-clasp:\\ 5\end{tabular}                   & \begin{tabular}[c]{@{}c@{}}$\alpha \neq 1$:\\ 5\end{tabular}                & \begin{tabular}[c]{@{}c@{}}Tunnel number 3:\\ 5\end{tabular}   & \begin{tabular}[c]{@{}c@{}}Identified by:\\ \ref{thm:lustigmoriah}\end{tabular}                                                                                                                                                         \\ \hline
			\end{tabular}
		\end{scriptsize}
		\caption{Identification of tunnel number for non-alternating knots with 11 and 12 crossings.}\label{table:nonalternating}
	\end{center}
\end{table}

Table \ref{table:percent} shows the relative effectiveness of each criterion in computing tunnel number. In particular, Method \ref{method} was able to identify all 756 tunnel number one knots known to us; this includes 23 non-alternating tunnel number one knots which were not identified by any other criterion.

\begin{table}[h]
	\begin{center}
	\begin{normalsize}
	\begin{tabular}{|c|c|c|}
		\hline
		Criteria                                      & \begin{tabular}[c]{@{}c@{}}Amount of tunnel\\ numbers identified \\ with this criterion\end{tabular} & \begin{tabular}[c]{@{}c@{}}\% of tunnel \\ numbers identified \\ with this criterion\end{tabular} \\ \hline \hline
		Corollary \ref{cor:nonaltclasp}                                 & 364                                                                                                & 14.35\%                                                                     \\ \hline
		Theorem \ref{thm:lustigmoriah}                                     & 57                                                                                                 & 2.24\%                                                                      \\ \hline
		Theorem \ref{thm:Lackenby}, Corollary \ref{cor:3bridge} & 1903                                                                                               & 75.04\%                                                                     \\ \hline
		Theorem \ref{thm:MSYmontesinos}                                    & 714                                                                                                & 28.15\%                                                                     \\ \hline
		Theorem \ref{thm:symmetry}                                   & 1366                                                                                               & 53.86\%                                                                     \\ \hline
		Proposition \ref{prop:Nakanishi}                        & 362                                                                                                & 14.27\%                                                                     \\ \hline
		Theorem \ref{thm:Kohno}                                  & 89                                                                                                 & 3.51\%                                                                      \\ \hline \hline
		Total amount of tunnel numbers identified      & 2536                                                                                               & 100\%                                                                      \\ \hline
	\end{tabular}
\end{normalsize}
\caption{Relative effectiveness of each criterion in computing tunnel number, out of the 2536 tunnel numbers that we know of knots with 12 crossings or fewer. Propositions \ref{thm:tunnelbridge} and \ref{thm:clasp} are used implicitly.}\label{table:percent}
\end{center}
\end{table}

We have been unable to identify the tunnel number of 192 of the non-altenating knots with 11 and 12 crossings. The tunnel number of these 192 knots can be bounded above by Theorem \ref{thm:tunnelbridge} and Corollary \ref{cor:nonaltclasp}, so these knots have tunnel number at most two. Based on the effectiveness of Method \ref{method}, we conjecture that all of these 192 knots have tunnel number two.

Overall, of the 2728 total knots with 11 and 12 crossings, we have found 2536 tunnel numbers. Combining Propositions \ref{prop:alt} and \ref{prop:nonalt}, we obtain Theorem \ref{thm:main}.

\section{Future directions}

Some alternative methods to compute tunnel number not used in this paper include computing Yamada's invariant of spatial graphs \cite{MSY2}, finding better methods to compute the Nakanishi indices of knots, or implementing the algorithms by Lackenby \cite{LackenbyAlgorithm} or Li \cite{LiAlgorithm}.

A possible continuation of this work includes computing the tunnel numbers of knots with 13 or more crossings. In this case, it would be useful to have a list of Nakanishi indices of knots with 13 or more crossings (possibly computed using the Knotorious program \cite{knotorious}). The bridge indices of knots up to 16 crossings can be found from \cite{BKVV} and the list of Montesinos knots up to 14 knots can be found from this paper.

We hope that our tunnel number data can be used to make a list of bridge spectra of knots \cite{Zupan}, to help conjecture a full characterization of tunnel number two knots, or as evidence for or against the Rank-Genus conjecture for knots \cite[Question 2]{Li}.

\appendix
\section{List of tunnel numbers}\label{appendix}

\noindent \textbf{11 and 12 crossing knots with tunnel number one:}

\noindent \tiny{11a$ X $ with $X  = $ 1,
	4,
	5,
	6,
	7,
	8,
	9,
	10,
	11,
	12,
	13,
	16,
	21,
	23,
	31,
	32,
	33,
	34,
	35,
	36,
	37,
	39,
	40,
	41,
	42,
	45,
	46,
	48,
	50,
	51,
	55,
	56,
	58,
	59,
	60,
	61,
	62,
	63,
	64,
	65,
	74,
	75,
	77,
	82,
	83,
	84,
	85,
	89,
	90,
	91,
	92,
	93,
	94,
	95,
	96,
	98,
	110,
	111,
	117,
	118,
	119,
	120,
	121,
	140,
	144,
	145,
	153,
	154,
	159,
	161,
	166,
	174,
	175,
	176,
	177,
	178,
	179,
	180,
	182,
	183,
	184,
	185,
	186,
	188,
	190,
	191,
	192,
	193,
	195,
	203,
	204,
	205,
	206,
	207,
	208,
	210,
	211,
	220,
	221,
	222,
	224,
	225,
	226,
	229,
	230,
	234,
	235,
	236,
	238,
	240,
	241,
	242,
	243,
	245,
	246,
	247,
	257,
	258,
	259,
	260,
	306,
	307,
	308,
	309,
	310,
	311,
	333,
	334,
	335,
	336,
	337,
	339,
	341,
	342,
	343,
	355,
	356,
	357,
	358,
	359,
	360,
	363,
	364,
	365,
	367.}

\noindent \tiny{11n$ X $ with $X = $ 1,
	2,
	3,
	12,
	13,
	14,
	15,
	16,
	17,
	18,
	19,
	20,
	28,
	29,
	30,
	38,
	51,
	52,
	53,
	54,
	55,
	56,
	57,
	58,
	59,
	60,
	61,
	62,
	63,
	64,
	70,
	79,
	96,
	102,
	104,
	111,
	135,
	143,
	145.}
	
\noindent \tiny{12a$ X $ with $X = $ 3,
9,
12,
16,
17,
18,
19,
20,
21,
22,
24,
25,
26,
27,
28,
31,
32,
34,
35,
37,
38,
42,
56,
62,
77,
78,
81,
82,
83,
84,
85,
86,
87,
95,
96,
97,
98,
99,
104,
106,
110,
112,
118,
121,
124,
128,
130,
141,
142,
143,
144,
145,
146,
147,
148,
149,
151,
152,
153,
158,
159,
160,
161,
165,
168,
169,
170,
171,
172,
173,
174,
175,
176,
178,
179,
180,
181,
183,
193,
194,
196,
197,
203,
204,
205,
206,
210,
212,
221,
226,
234,
235,
236,
237,
238,
239,
240,
241,
242,
243,
246,
247,
251,
254,
255,
257,
258,
259,
299,
300,
302,
303,
304,
305,
306,
307,
329,
330,
345,
378,
379,
380,
384,
385,
406,
420,
421,
422,
423,
424,
425,
436,
437,
447,
454,
463,
471,
477,
482,
497,
498,
499,
500,
501,
502,
506,
507,
508,
510,
511,
512,
514,
517,
518,
519,
520,
521,
522,
528,
532,
533,
534,
535,
536,
537,
538,
539,
540,
541,
544,
545,
549,
550,
551,
552,
579,
580,
581,
582,
583,
584,
585,
591,
595,
596,
597,
600,
601,
643,
644,
649,
650,
651,
652,
669,
681,
682,
684,
689,
690,
691,
713,
714,
715,
716,
717,
718,
720,
721,
722,
723,
724,
726,
727,
728,
729,
731,
732,
733,
736,
738,
740,
743,
744,
745,
757,
758,
759,
760,
761,
762,
763,
764,
772,
773,
774,
775,
789,
790,
791,
792,
794,
795,
796,
797,
800,
802,
803,
822,
823,
826,
827,
835,
836,
837,
838,
839,
840,
842,
843,
876,
877,
878,
879,
880,
881,
882,
937,
938,
1023,
1024,
1029,
1030,
1033,
1034,
1039,
1040,
1125,
1126,
1127,
1128,
1129,
1130,
1131,
1132,
1133,
1134,
1135,
1136,
1138,
1139,
1140,
1145,
1146,
1148,
1149,
1157,
1158,
1159,
1161,
1162,
1163,
1165,
1166,
1214,
1273,
1274,
1275,
1276,
1277,
1278,
1279,
1281,
1282,
1287.}

\noindent \tiny{12n$ X $ with $ X =  $ 11,
	12,
	13,
	25,
	35,
	36,
	37,
	38,
	39,
	40,
	41,
	42,
	43,
	44,
	45,
	46,
	47,
	48,
	54,
	65,
	77,
	78,
	79,
	121,
	150,
	151,
	152,
	153,
	154,
	155,
	159,
	160,
	161,
	162,
	163,
	164,
	165,
	166,
	167,
	168,
	169,
	170,
	171,
	172,
	198,
	199,
	200,
	218,
	233,
	234,
	235,
	236,
	237,
	238,
	239,
	240,
	241,
	242,
	243,
	244,
	248,
	249,
	250,
	251,
	288,
	289,
	293,
	303,
	304,
	305,
	306,
	307,
	308,
	309,
	310,
	352,
	370,
	371,
	374,
	404,
	433,
	446,
	464,
	483,
	487,
	488,
	500,
	501,
	502,
	503,
	552,
	575,
	579,
	591,
	594,
	624,
	650,
	721,
	722,
	723,
	724,
	725,
	726,
	749,
	851.}

\normalsize

\noindent \textbf{11 and 12 crossing knots with tunnel number two:}

\noindent \tiny{11a$ X $ with $ X= $ 2,
	3,
	14,
	15,
	17,
	18,
	19,
	20,
	22,
	24,
	25,
	26,
	27,
	28,
	29,
	30,
	38,
	43,
	44,
	47,
	49,
	52,
	53,
	54,
	57,
	66,
	67,
	68,
	69,
	70,
	71,
	72,
	73,
	76,
	78,
	79,
	80,
	81,
	86,
	87,
	88,
	97,
	99,
	100,
	101,
	102,
	103,
	104,
	105,
	106,
	107,
	108,
	109,
	112,
	113,
	114,
	115,
	116,
	122,
	123,
	124,
	125,
	126,
	127,
	128,
	129,
	130,
	131,
	132,
	133,
	134,
	135,
	136,
	137,
	138,
	139,
	141,
	142,
	143,
	146,
	147,
	148,
	149,
	150,
	151,
	152,
	155,
	156,
	157,
	158,
	160,
	162,
	163,
	164,
	165,
	167,
	168,
	169,
	170,
	171,
	172,
	173,
	181,
	187,
	189,
	194,
	196,
	197,
	198,
	199,
	200,
	201,
	202,
	209,
	212,
	213,
	214,
	215,
	216,
	217,
	218,
	219,
	223,
	227,
	228,
	231,
	232,
	233,
	237,
	239,
	244,
	248,
	249,
	250,
	251,
	252,
	253,
	254,
	255,
	256,
	261,
	262,
	263,
	264,
	265,
	266,
	267,
	268,
	269,
	270,
	271,
	272,
	273,
	274,
	275,
	276,
	277,
	278,
	279,
	280,
	281,
	282,
	283,
	284,
	285,
	286,
	287,
	288,
	289,
	290,
	291,
	292,
	293,
	294,
	295,
	296,
	297,
	298,
	299,
	300,
	301,
	302,
	303,
	304,
	305,
	312,
	313,
	314,
	315,
	316,
	317,
	318,
	319,
	320,
	321,
	322,
	323,
	324,
	325,
	326,
	327,
	328,
	329,
	330,
	331,
	332,
	338,
	340,
	344,
	345,
	346,
	347,
	348,
	349,
	350,
	351,
	352,
	353,
	354,
	361,
	362,
	366.}

\noindent \tiny{11n$ X $ with $ X= $ 4,
	5,
	6,
	7,
	8,
	9,
	10,
	11,
	21,
	22,
	23,
	24,
	25,
	26,
	27,
	31,
	32,
	33,
	34,
	35,
	36,
	37,
	39,
	40,
	41,
	42,
	43,
	44,
	46,
	47,
	48,
	49,
	50,
	67,
	68,
	69,
	71,
	72,
	73,
	74,
	75,
	76,
	77,
	78,
	80,
	81,
	82,
	83,
	84,
	85,
	87,
	88,
	89,
	90,
	91,
	93,
	97,
	100,
	101,
	103,
	105,
	106,
	107,
	108,
	109,
	110,
	114,
	116,
	120,
	122,
	124,
	126,
	128,
	129,
	130,
	131,
	132,
	133,
	134,
	137,
	138,
	139,
	140,
	141,
	147,
	148,
	151,
	154,
	157,
	159,
	160,
	162,
	164,
	165,
	166,
	167,
	172,
	174,
	175,
	176,
	177,
	183,
	184,
	185.}

\noindent \tiny{12a$ X $ with $ X= $ 1,
	2,
	4,
	5,
	6,
	7,
	8,
	10,
	11,
	13,
	14,
	15,
	23,
	29,
	30,
	33,
	36,
	39,
	40,
	41,
	43,
	44,
	45,
	46,
	47,
	48,
	49,
	50,
	51,
	52,
	53,
	54,
	55,
	57,
	58,
	59,
	60,
	61,
	63,
	64,
	65,
	66,
	67,
	68,
	69,
	70,
	71,
	72,
	73,
	74,
	75,
	76,
	79,
	80,
	88,
	89,
	90,
	91,
	92,
	93,
	94,
	100,
	101,
	102,
	103,
	105,
	107,
	108,
	109,
	111,
	113,
	114,
	115,
	116,
	117,
	119,
	120,
	122,
	123,
	125,
	126,
	127,
	129,
	131,
	132,
	133,
	134,
	135,
	136,
	137,
	138,
	139,
	140,
	150,
	154,
	155,
	156,
	157,
	162,
	163,
	164,
	166,
	167,
	177,
	182,
	184,
	185,
	186,
	187,
	188,
	189,
	190,
	191,
	192,
	195,
	198,
	199,
	200,
	201,
	202,
	207,
	208,
	209,
	211,
	213,
	214,
	215,
	216,
	217,
	218,
	219,
	220,
	222,
	223,
	224,
	225,
	227,
	228,
	229,
	230,
	231,
	232,
	233,
	244,
	245,
	248,
	249,
	250,
	252,
	253,
	256,
	260,
	261,
	262,
	263,
	264,
	265,
	266,
	267,
	268,
	269,
	270,
	271,
	272,
	273,
	274,
	275,
	276,
	277,
	278,
	279,
	280,
	281,
	282,
	283,
	284,
	285,
	286,
	287,
	288,
	289,
	290,
	291,
	292,
	293,
	294,
	295,
	296,
	297,
	298,
	301,
	308,
	309,
	310,
	311,
	312,
	313,
	314,
	315,
	316,
	317,
	318,
	319,
	320,
	321,
	322,
	323,
	324,
	325,
	326,
	327,
	328,
	331,
	332,
	333,
	334,
	335,
	336,
	337,
	338,
	339,
	340,
	341,
	342,
	343,
	344,
	346,
	347,
	348,
	349,
	350,
	351,
	352,
	353,
	354,
	355,
	356,
	357,
	358,
	359,
	360,
	361,
	362,
	363,
	364,
	365,
	366,
	367,
	368,
	369,
	370,
	371,
	372,
	373,
	374,
	375,
	376,
	377,
	381,
	382,
	383,
	386,
	387,
	388,
	389,
	390,
	391,
	392,
	393,
	394,
	395,
	396,
	397,
	398,
	399,
	400,
	401,
	402,
	403,
	404,
	405,
	407,
	408,
	409,
	410,
	411,
	412,
	413,
	414,
	415,
	416,
	417,
	418,
	419,
	426,
	427,
	428,
	429,
	430,
	431,
	432,
	433,
	434,
	435,
	438,
	439,
	440,
	441,
	442,
	443,
	444,
	445,
	446,
	448,
	449,
	450,
	451,
	452,
	453,
	455,
	456,
	457,
	458,
	459,
	460,
	461,
	462,
	464,
	465,
	466,
	467,
	468,
	469,
	470,
	472,
	473,
	474,
	475,
	476,
	478,
	479,
	480,
	481,
	483,
	484,
	485,
	486,
	487,
	488,
	489,
	490,
	491,
	492,
	493,
	494,
	495,
	496,
	503,
	504,
	505,
	509,
	513,
	515,
	516,
	523,
	524,
	525,
	526,
	527,
	529,
	530,
	531,
	542,
	543,
	546,
	547,
	548,
	553,
	555,
	556,
	557,
	558,
	559,
	560,
	561,
	562,
	563,
	564,
	565,
	566,
	567,
	568,
	569,
	570,
	571,
	572,
	573,
	574,
	575,
	576,
	577,
	578,
	586,
	587,
	588,
	589,
	590,
	592,
	593,
	594,
	598,
	599,
	602,
	603,
	604,
	605,
	606,
	607,
	608,
	609,
	610,
	611,
	612,
	613,
	614,
	615,
	616,
	617,
	618,
	619,
	620,
	621,
	622,
	623,
	624,
	625,
	626,
	627,
	628,
	629,
	630,
	631,
	632,
	633,
	634,
	635,
	636,
	637,
	638,
	639,
	640,
	641,
	642,
	645,
	646,
	647,
	648,
	653,
	654,
	655,
	656,
	657,
	658,
	659,
	660,
	661,
	662,
	663,
	664,
	665,
	666,
	667,
	668,
	670,
	671,
	672,
	673,
	674,
	675,
	676,
	677,
	678,
	679,
	680,
	683,
	685,
	686,
	687,
	688,
	692,
	693,
	694,
	695,
	696,
	697,
	698,
	699,
	700,
	701,
	702,
	703,
	704,
	705,
	706,
	707,
	708,
	709,
	710,
	711,
	712,
	719,
	725,
	730,
	734,
	735,
	737,
	739,
	741,
	742,
	746,
	747,
	748,
	749,
	751,
	752,
	753,
	754,
	755,
	756,
	765,
	766,
	767,
	768,
	769,
	770,
	771,
	776,
	777,
	778,
	779,
	780,
	781,
	782,
	783,
	784,
	785,
	786,
	787,
	788,
	793,
	798,
	799,
	801,
	804,
	805,
	806,
	807,
	808,
	809,
	810,
	811,
	812,
	813,
	814,
	815,
	816,
	817,
	818,
	819,
	820,
	821,
	824,
	825,
	828,
	829,
	830,
	831,
	832,
	833,
	834,
	841,
	844,
	845,
	846,
	847,
	848,
	849,
	850,
	851,
	852,
	853,
	854,
	855,
	856,
	857,
	858,
	859,
	860,
	861,
	862,
	863,
	864,
	865,
	866,
	867,
	868,
	869,
	870,
	871,
	872,
	873,
	874,
	875,
	883,
	884,
	885,
	886,
	887,
	888,
	889,
	890,
	891,
	892,
	893,
	894,
	895,
	896,
	897,
	898,
	899,
	900,
	901,
	902,
	903,
	904,
	905,
	906,
	907,
	908,
	909,
	910,
	911,
	912,
	913,
	914,
	915,
	916,
	917,
	918,
	919,
	920,
	921,
	922,
	923,
	924,
	925,
	926,
	927,
	928,
	929,
	930,
	931,
	932,
	933,
	934,
	935,
	936,
	939,
	940,
	941,
	942,
	943,
	944,
	945,
	946,
	947,
	948,
	949,
	950,
	951,
	952,
	953,
	954,
	955,
	956,
	957,
	958,
	959,
	960,
	961,
	962,
	963,
	964,
	965,
	966,
	967,
	968,
	969,
	970,
	971,
	972,
	973,
	974,
	975,
	976,
	977,
	978,
	979,
	980,
	981,
	982,
	983,
	984,
	985,
	986,
	987,
	988,
	989,
	990,
	991,
	992,
	993,
	994,
	995,
	996,
	997,
	998,
	999,
	1000,
	1001,
	1002,
	1003,
	1004,
	1005,
	1006,
	1007,
	1008,
	1009,
	1010,
	1011,
	1012,
	1013,
	1014,
	1015,
	1016,
	1017,
	1018,
	1019,
	1020,
	1021,
	1022,
	1025,
	1026,
	1027,
	1028,
	1031,
	1032,
	1035,
	1036,
	1037,
	1038,
	1041,
	1042,
	1043,
	1044,
	1045,
	1046,
	1047,
	1048,
	1049,
	1050,
	1051,
	1052,
	1053,
	1054,
	1055,
	1056,
	1057,
	1058,
	1059,
	1060,
	1061,
	1062,
	1063,
	1064,
	1065,
	1066,
	1067,
	1068,
	1069,
	1070,
	1071,
	1072,
	1073,
	1074,
	1075,
	1076,
	1077,
	1078,
	1079,
	1080,
	1081,
	1082,
	1083,
	1084,
	1085,
	1086,
	1087,
	1088,
	1089,
	1090,
	1091,
	1092,
	1093,
	1094,
	1095,
	1096,
	1097,
	1098,
	1099,
	1100,
	1101,
	1102,
	1103,
	1104,
	1105,
	1106,
	1107,
	1108,
	1109,
	1110,
	1111,
	1112,
	1113,
	1114,
	1115,
	1116,
	1117,
	1118,
	1119,
	1120,
	1121,
	1122,
	1123,
	1124,
	1137,
	1141,
	1142,
	1143,
	1144,
	1147,
	1150,
	1151,
	1152,
	1153,
	1154,
	1155,
	1156,
	1160,
	1164,
	1167,
	1168,
	1169,
	1170,
	1171,
	1172,
	1173,
	1174,
	1175,
	1176,
	1177,
	1178,
	1179,
	1180,
	1181,
	1182,
	1183,
	1184,
	1185,
	1186,
	1187,
	1188,
	1189,
	1190,
	1191,
	1192,
	1193,
	1194,
	1195,
	1196,
	1197,
	1198,
	1199,
	1200,
	1201,
	1202,
	1203,
	1204,
	1205,
	1206,
	1207,
	1208,
	1209,
	1210,
	1211,
	1212,
	1213,
	1215,
	1216,
	1217,
	1218,
	1219,
	1220,
	1221,
	1222,
	1223,
	1224,
	1225,
	1226,
	1227,
	1228,
	1229,
	1230,
	1231,
	1232,
	1233,
	1234,
	1235,
	1236,
	1237,
	1238,
	1239,
	1240,
	1241,
	1242,
	1243,
	1244,
	1245,
	1246,
	1247,
	1248,
	1249,
	1250,
	1251,
	1252,
	1253,
	1254,
	1255,
	1256,
	1257,
	1258,
	1259,
	1260,
	1261,
	1262,
	1263,
	1264,
	1265,
	1266,
	1267,
	1268,
	1269,
	1270,
	1271,
	1272,
	1280,
	1283,
	1284,
	1285,
	1286,
	1288.}

\noindent \tiny{12n$ X $ with $ X= $ 1,
	2,
	3,
	4,
	5,
	6,
	7,
	8,
	9,
	10,
	14,
	15,
	17,
	18,
	19,
	20,
	21,
	22,
	23,
	24,
	26,
	27,
	28,
	29,
	30,
	31,
	32,
	33,
	34,
	49,
	50,
	51,
	55,
	56,
	57,
	58,
	59,
	60,
	61,
	62,
	63,
	64,
	66,
	67,
	68,
	69,
	70,
	71,
	72,
	73,
	74,
	75,
	76,
	80,
	81,
	82,
	83,
	84,
	85,
	86,
	87,
	89,
	90,
	91,
	92,
	93,
	94,
	95,
	96,
	97,
	98,
	99,
	100,
	101,
	102,
	103,
	104,
	105,
	106,
	107,
	108,
	109,
	110,
	111,
	112,
	113,
	114,
	115,
	116,
	117,
	118,
	119,
	120,
	122,
	123,
	124,
	125,
	126,
	127,
	128,
	129,
	130,
	131,
	132,
	133,
	134,
	135,
	136,
	137,
	138,
	139,
	140,
	141,
	142,
	143,
	144,
	145,
	146,
	147,
	148,
	149,
	173,
	174,
	175,
	176,
	177,
	178,
	179,
	180,
	181,
	182,
	183,
	184,
	185,
	186,
	187,
	188,
	189,
	190,
	191,
	192,
	193,
	194,
	195,
	196,
	197,
	201,
	202,
	203,
	205,
	206,
	208,
	209,
	210,
	212,
	213,
	214,
	215,
	216,
	217,
	219,
	220,
	221,
	222,
	223,
	224,
	225,
	226,
	227,
	229,
	231,
	232,
	252,
	253,
	254,
	255,
	256,
	257,
	259,
	261,
	262,
	263,
	264,
	265,
	266,
	267,
	268,
	269,
	270,
	271,
	272,
	273,
	274,
	275,
	276,
	277,
	278,
	281,
	286,
	290,
	291,
	292,
	294,
	295,
	296,
	297,
	298,
	300,
	301,
	302,
	312,
	313,
	315,
	316,
	317,
	318,
	319,
	320,
	322,
	323,
	324,
	325,
	326,
	327,
	331,
	332,
	333,
	334,
	335,
	336,
	337,
	338,
	339,
	340,
	341,
	342,
	343,
	344,
	345,
	346,
	347,
	348,
	349,
	350,
	351,
	353,
	355,
	356,
	357,
	358,
	359,
	360,
	361,
	362,
	363,
	364,
	365,
	366,
	367,
	368,
	369,
	372,
	376,
	378,
	379,
	380,
	381,
	382,
	384,
	385,
	386,
	387,
	388,
	389,
	390,
	393,
	394,
	396,
	397,
	398,
	399,
	400,
	401,
	402,
	403,
	405,
	406,
	407,
	408,
	409,
	410,
	412,
	413,
	414,
	415,
	418,
	419,
	420,
	421,
	422,
	423,
	424,
	427,
	428,
	429,
	430,
	431,
	434,
	435,
	436,
	437,
	440,
	442,
	444,
	447,
	448,
	454,
	455,
	456,
	457,
	460,
	461,
	462,
	463,
	465,
	466,
	467,
	468,
	469,
	470,
	471,
	472,
	473,
	474,
	475,
	476,
	477,
	478,
	479,
	480,
	484,
	485,
	491,
	492,
	493,
	494,
	495,
	496,
	497,
	498,
	504,
	505,
	506,
	507,
	508,
	509,
	510,
	512,
	513,
	514,
	515,
	516,
	517,
	518,
	520,
	522,
	523,
	526,
	528,
	529,
	530,
	531,
	532,
	533,
	534,
	536,
	539,
	540,
	541,
	545,
	546,
	547,
	549,
	550,
	557,
	558,
	559,
	560,
	561,
	562,
	563,
	564,
	565,
	566,
	567,
	568,
	569,
	570,
	571,
	572,
	573,
	574,
	576,
	577,
	578,
	581,
	582,
	583,
	584,
	586,
	588,
	589,
	590,
	592,
	593,
	597,
	598,
	599,
	600,
	601,
	602,
	604,
	605,
	606,
	607,
	608,
	609,
	611,
	614,
	616,
	617,
	618,
	619,
	620,
	621,
	622,
	623,
	626,
	627,
	630,
	631,
	633,
	634,
	635,
	636,
	637,
	640,
	641,
	643,
	644,
	645,
	646,
	647,
	648,
	649,
	651,
	652,
	654,
	655,
	656,
	657,
	658,
	659,
	660,
	661,
	662,
	665,
	666,
	669,
	670,
	672,
	673,
	674,
	675,
	676,
	677,
	678,
	679,
	680,
	681,
	682,
	683,
	684,
	685,
	686,
	687,
	688,
	689,
	690,
	693,
	694,
	695,
	696,
	697,
	698,
	699,
	701,
	702,
	703,
	704,
	705,
	706,
	708,
	709,
	710,
	711,
	712,
	713,
	714,
	715,
	716,
	717,
	719,
	720,
	727,
	728,
	729,
	730,
	731,
	732,
	733,
	734,
	735,
	736,
	737,
	738,
	742,
	745,
	746,
	747,
	748,
	752,
	753,
	755,
	756,
	757,
	758,
	760,
	761,
	762,
	763,
	764,
	765,
	766,
	767,
	768,
	769,
	770,
	771,
	772,
	773,
	774,
	775,
	776,
	778,
	779,
	780,
	781,
	782,
	783,
	784,
	786,
	787,
	788,
	789,
	790,
	791,
	792,
	793,
	794,
	795,
	796,
	797,
	798,
	799,
	800,
	802,
	806,
	812,
	813,
	816,
	817,
	819,
	826,
	827,
	828,
	831,
	834,
	836,
	837,
	838,
	839,
	840,
	841,
	842,
	843,
	844,
	846,
	847,
	848,
	852,
	853,
	854,
	857,
	858,
	859,
	861,
	862,
	863,
	864,
	865,
	866,
	869,
	870,
	871,
	872,
	873,
	874,
	876,
	877,
	878,
	879,
	881,
	883,
	884,
	885,
	887,
	888.}

\normalsize

\noindent \textbf{11 and 12 crossing knots with tunnel number three:}

\noindent \tiny{12a$ X $ with $ X =$ 554,
	750.}

\noindent \tiny{12n$ X $ with $ X =$ 553,
	554,
	555,
	556,
	642.}

\normalsize

\noindent \textbf{11 and 12 crossing knots with tunnel number $ \in \brackets{1,2} $:}

\noindent \tiny{11n$ X $ with $ X = $ 45,
	65,
	66,
	86,
	92,
	94,
	95,
	98,
	99,
	112,
	113,
	115,
	117,
	118,
	119,
	121,
	123,
	125,
	127,
	136,
	142,
	144,
	146,
	149,
	150,
	152,
	153,
	155,
	156,
	158,
	161,
	163,
	168,
	169,
	170,
	171,
	173,
	178,
	179,
	180,
	181,
	182.}

\noindent \tiny{12n$ X $ with $ X = $ 16,
	52,
	53,
	88,
	156,
	157,
	158,
	204,
	207,
	211,
	228,
	230,
	245,
	246,
	247,
	258,
	260,
	279,
	280,
	282,
	283,
	284,
	285,
	287,
	299,
	311,
	314,
	321,
	328,
	329,
	330,
	354,
	373,
	375,
	377,
	383,
	391,
	392,
	395,
	411,
	416,
	417,
	425,
	426,
	432,
	438,
	439,
	441,
	443,
	445,
	449,
	450,
	451,
	452,
	453,
	458,
	459,
	481,
	482,
	486,
	489,
	490,
	499,
	511,
	519,
	521,
	524,
	525,
	527,
	535,
	537,
	538,
	542,
	543,
	544,
	548,
	551,
	580,
	585,
	587,
	595,
	596,
	603,
	610,
	612,
	613,
	615,
	625,
	628,
	629,
	632,
	638,
	639,
	653,
	663,
	664,
	667,
	668,
	671,
	691,
	692,
	700,
	707,
	718,
	739,
	740,
	741,
	743,
	744,
	750,
	751,
	754,
	759,
	777,
	785,
	801,
	803,
	804,
	805,
	807,
	808,
	809,
	810,
	811,
	814,
	815,
	818,
	820,
	821,
	822,
	823,
	824,
	825,
	829,
	830,
	832,
	833,
	835,
	845,
	849,
	850,
	855,
	856,
	860,
	867,
	868,
	875,
	880,
	882,
	886.}

\normalsize


\bibliography{References}
\bibliographystyle{aomplain}

\end{document}